\newcommand{\ess}{\mathrm{ess}}
\newproof{proof}{Proof}
\newtheorem{theorem}{Theorem}[section]
\newtheorem{lemma}[theorem]{Lemma}
\newtheorem{corollary}[theorem]{Corollary}
\newtheorem{definition}[theorem]{Definition}
\newtheorem{example}[theorem]{Example}
\begin{document}

\begin{frontmatter}

\title{On the Phases of a Semi-Sectorial Matrix}
\author[label2]{Li Qiu\corref{cor1}}
\ead{eeqiu@ust.hk}
\cortext[cor1]{Corresponding author}
\author[label2]{Dan Wang}
\ead{dwangah@connect.ust.hk}
\author[label2]{Xin Mao}
\ead{xmaoaa@connect.ust.hk}
\author[label3]{Wei Chen}
\ead{w.chen@pku.edu.cn}

\address[label2]{Department of Electronic and Computer Engineering, Hong Kong University of Science and Technology, Clear Water Bay, Kowloon, Hong Kong, China}
\address[label3]{Department of Mechanics and Engineering Science \& State Key Laboratory for Turbulence and Complex Systems, Peking University, Beijing 100871, China}

\begin{abstract}
In this paper, we extend the definition of phases of sectorial matrices to those of semi-sectorial matrices, which are possibly singular. Properties of the phases are also extended, including those of the Moore-Penrose generalized inverse, compressions and Schur complements, matrix sums and products. In particular, a majorization relation is established between the phases of the nonzero eigenvalues of $AB$ and the phases of the compressions of $A$ and $B$, which leads to a generalized matrix small phase theorem. For the matrices which are not necessarily semi-sectorial, we define their (largest and smallest) essential phases via diagonal similarity transformation. An explicit expression for the essential phases of a Laplacian matrix of a directed graph is obtained.
\end{abstract}

\begin{keyword}
phases \sep essential phases \sep semi-sectorial matrices \sep majorization \sep Laplacian \sep matrix small phase theorem

\vskip 3pt

\MSC[2010]
15A03  \sep 15A09  \sep 15A23  \sep 15A42  \sep 15A60  \sep 15B48  \sep 15B57
\end{keyword}

\end{frontmatter}

\section{Introduction}

Recently, we studied the phases of a class of complex matrices called sectorial matrices \cite{WCKQ2020}.
Here we will extend the study to those of a wider class of matrices called semi-sectorial matrices.
A challenge in this new study is that in general an $n \times n$ semi-sectorial matrix does not have $n$ phases.
Generalizing the results in \cite{WCKQ2020} requires great attention in keeping track of the numbers of phases for the matrices involved.

\section{Matrix Phases}

The numerical range, also called field of
values, of a matrix $C \!\in\! \mathbb{C}^{n\times n}$ is defined as
\[
W(C) = \{ x^*Cx: x \in \mathbb{C}^n \mbox{ with } \|x\|=1\},
\]
which, as a subset of
$\mathbb{C}$, is compact and convex, and contains the spectrum of $C$ \cite{horntopics}. Furthermore, the angular numerical range, also called angular field of values, of $C$ is defined as
\[
W'(C) = \{ x^*Cx: x \in \mathbb{C}^n, x\neq 0 \},
\]
which is the conic hull of $W(C)$ and is always a convex cone. The field angle of $C$, denoted by $\delta(C)$, is defined as the angle subtended by $W'(C)$ if $W'(C)$ is salient, i.e., does not contain a line through the origin, as $\pi$ if $W'(C)$ contains one line through the origin, and as $2\pi$ if $W'(C)$ is the whole complex plane. See \cite{horntopics} for more details.

\begin{definition}
\    \
\begin{enumerate}
\item $C$ is said to be sectorial if $0\notin W(C)$.
\item $C$ is said to be quasi-sectorial if $\delta(C) < \pi$.
\item $C$ is said to be semi-sectorial if $\delta(C) \leq \pi$.
\end{enumerate}
\end{definition}

A sectorial matrix $C$ has its numerical range $W(C)$ contained in an open half complex plane due to its convexity and hence $\delta (C) < \pi$. It is known
that a sectorial $C$ is congruent to a diagonal unitary matrix that is unique up to a permutation \cite{Horn,ZhangFuzhen2015}, i.e., there exist a nonsingular matrix
$T$ and a diagonal unitary matrix $D$ such that
$C=T^*DT$.
This factorization is called sectorial decomposition in \cite{ZhangFuzhen2015}. 
In such a factorization, the eigenvalues (i.e., the diagonal elements) of $D$ are distributed in an arc on the unit circle with length less than $\pi$.
We can then attempt to define the phases of $C$, denoted by $$\overline{\phi}(C)=\phi_1(C)\geq\phi_2(C)\geq\dots\geq\phi_n(C)=\underline{\phi}(C),$$ to be the phases of the eigenvalues of $D$ so that $\overline{\phi}(C)-\underline{\phi}(C) <\pi$. The phases of $C$ defined in this way are not uniquely determined,
but are rather determined modulo $2\pi$. If we make a selection of $\displaystyle \gamma(C) = [\overline{\phi}(C)+\underline{\phi}(C)]/2$, called the phase center of $C$, in $\mathbb{R}$, then the phases are uniquely determined. The phases are said to take the principal values if $\gamma (C)$ is selected in $(-\pi, \pi]$. The phases defined in this fashion resemble the canonical angles of $C$ introduced in \cite{FurtadoJohnson2001}. Let us denote
\[
\phi (C) = \begin{bmatrix} \phi_1 (C) & \phi_2 (C) & \cdots & \phi_n(C) \end{bmatrix}.
\]

A graphic interpretation of the phases is illustrated in \mbox{Fig. \ref{fig1}}. The two angles from the positive real axis to each of the two supporting rays of $W(C)$ are $\overline{\phi}(C)$ and $\underline{\phi}(C)$ respectively. The other phases of $C$ lie in between.
\begin{figure}[htb]
\centering
\includegraphics[scale=0.5]{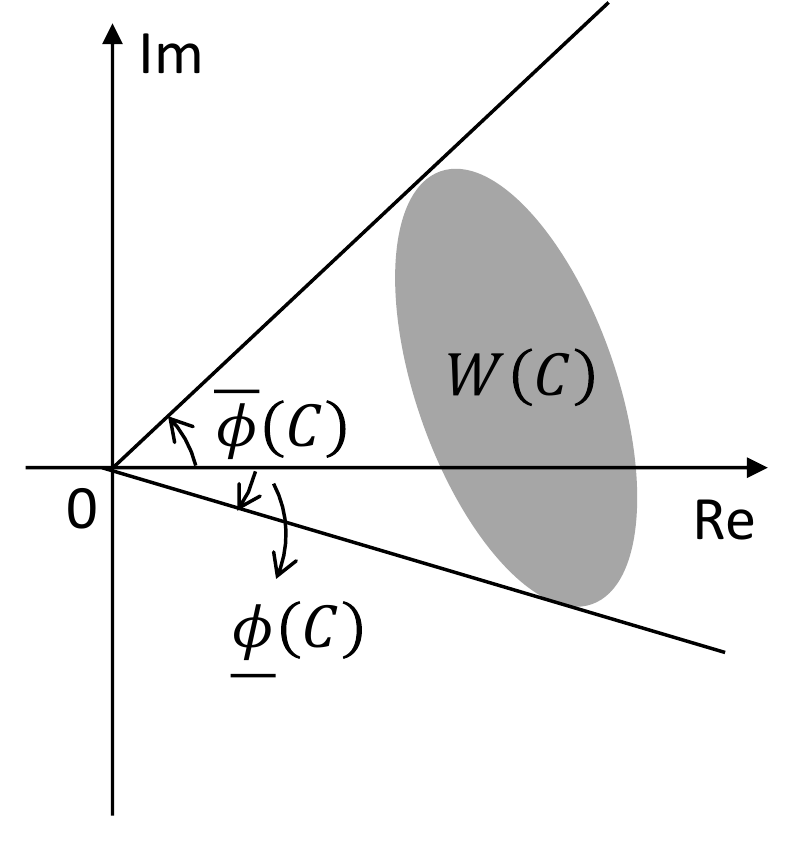}
\vspace{-10pt}
\caption{Geometric interpretation of $\overline{\phi}(C)$ and $\underline{\phi}(C)$.}
\label{fig1}
\end{figure}

An example of the numerical range of a quasi-sectorial matrix is shown in Fig.~\ref{fig:semi-sectorial}(a).
We see that 0 is a sharp point of the boundary of the numerical range. Let $r=\mathrm{rank} (C)$. Then a quasi-sectorial $C$ has a decomposition
\begin{align}
C= U \begin{bmatrix} 0 & 0 \\ 0 & C_s \end{bmatrix} U^*\label{quasis}
\end{align}
where $U$ is unitary and $C_s \in \mathbb{C}^{r \times r}$ is sectorial \cite{FurtadoJohnson2003}, i.e., the range and kernel of $C$ are orthogonal and the compression of $C$ to its range is sectorial. The phases of $C$ are then defined as the phases of $C_s$.
Hence an $n \times n$ rank $r$ quasi-sectorial matrix $C$ has $r$ phases satisfying
\[
\overline{\phi}(C)=\phi_1(C)\geq\phi_2(C)\geq\dots\geq\phi_r(C)=\underline{\phi}(C).
\]

\begin{figure}[htb]
\centering
\subfigure[Numerical range of a quasi-sectorial matrix.]{
\includegraphics[scale=0.44]{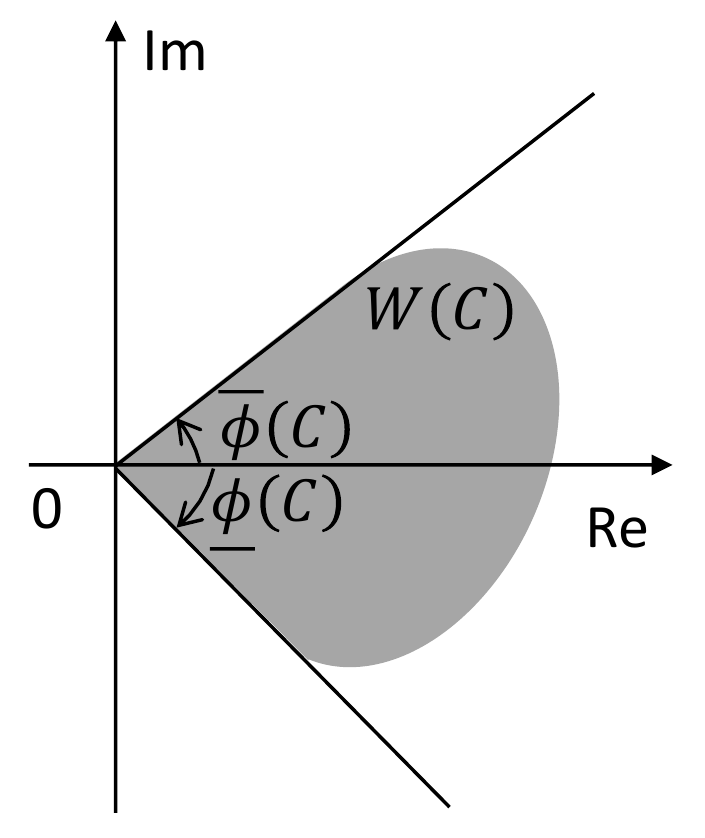}}
\hspace{30pt}
\subfigure[Numerical range of a semi-sectorial matrix.]{
\includegraphics[scale=0.44]{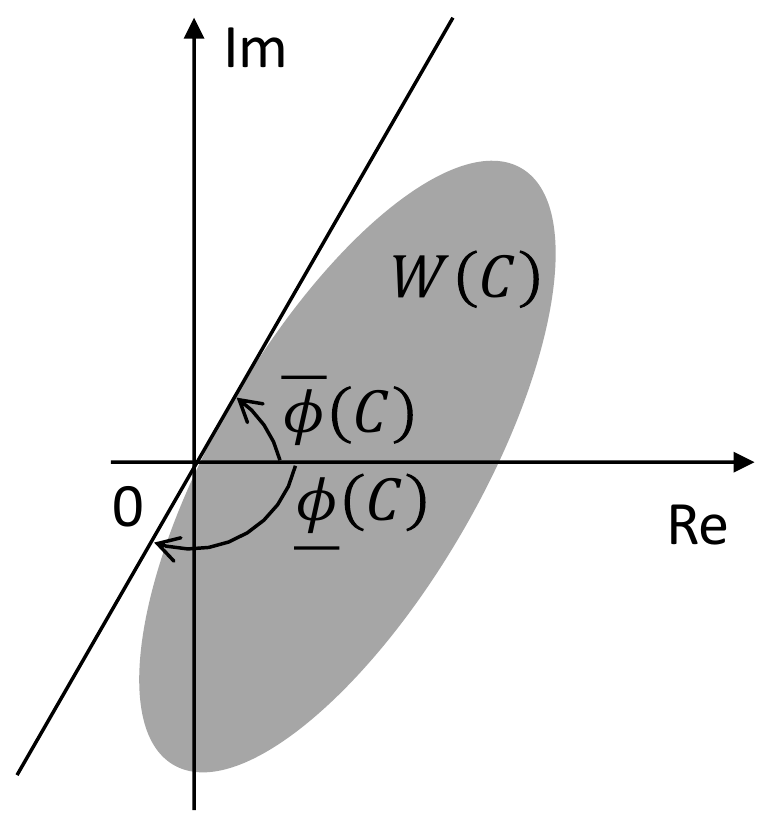}}
\caption{Origin is on the boundary of the numerical range.}
\label{fig:semi-sectorial}
\end{figure}

Here comes a question. While an $n \times n$ zero matrix is clearly not sectorial, is it quasi-sectorial?
The answer is affirmative. It has zero number of phases and following conventions we have $\overline{\phi} (0)=-\infty$ and $\underline{\phi}(0)=\infty$.

A typical example of the numerical range of a  semi-sectorial matrix is illustrated in Fig.~\ref{fig:semi-sectorial}(b). A degenerate case of semi-sectorial matrices is when the numerical range has no interior and is given by a straight interval containing origin in its relative interior.
In this case, $C$ is said to be rotated Hermitian and is subject to the following decomposition
\[
C= T^* \mathrm{diag} \{ 0_{n-r}, e^{j(\theta_0+\pi/2)} I,  e^{j(\theta_0-\pi/2)} I \} T.
\]
Here $\theta_0$, equal to the phase center $\gamma(C)$, is determined modulo $\pi$. It has two possible principal values in $(-\pi, \pi]$. The phases of $C$ are several copies of $\theta_0+\pi/2$
and several copies of $\theta_0-\pi/2$.

The generic case of semi-sectorial matrices is when the numerical range has nonempty interior. It is known \cite{FurtadoJohnson2003} that such a matrix has the following generalized sectorial decomposition
\begin{equation}
C=T^*\begin{bmatrix} 0_{n-r} & 0 & 0 \\ 0 & D & 0 \\  0 & 0 & E \end{bmatrix}T,
\label{gsd}
\end{equation}
where
\[
D=\mathrm{diag} \{ e^{j\theta_1}, \ldots , e^{j\theta_m}\}
\]
with $\theta_0+\pi/2 \geq \theta_1  \geq \cdots \geq \theta_m \geq \theta_0-\pi/2$ and
\[
E=\mathrm{diag} \left\{ e^{j\theta_0} \begin{bmatrix} 1 & 2 \\ 0 & 1 \end{bmatrix}, \dots , e^{j\theta_0} \begin{bmatrix} 1 & 2 \\ 0 & 1 \end{bmatrix} \right\}.
\]
In this case, the phases of $C$ are defined as $\theta_1, \ldots \theta_m$ together with $(r-m)/2$ copies of
$\theta_0\pm\pi/2$.

The notion of matrix phases subsumes the well-studied accretive and strictly accretive matrices \cite{Kato}, i.e., matrices with positive semi-definite and positive definite Hermitian parts respectively. In particular, the phases of a sectorial $C$ take principal values in $(- \pi/2, \pi/2)$ if and only if $C$ is strictly accretive; the phases of a semi-sectorial $C$ take principal values  in $[-\pi/2, \pi/2]$ if and only if $C$ is accretive. What is the role of quasi-sectorial matrices? A quasi-sectorial accretive matrix is called a quasi-strictly accretive matrix. A quasi-strictly accretive matrix cannot be identified from its Hermitian part. For example, $\begin{bmatrix} 1 & 0 \\ 0 & 0 \end{bmatrix}$ is quasi-strictly accretive while $\begin{bmatrix} 1 & 1 \\ -1 & 0 \end{bmatrix}$ is not though they have the same Hermitian part.

\section{Computation of Phases}

Computation involving numerical range is usually considered as complicated. However, the computation of phases can be rather easy for quasi-sectorial matrices. First, if $C$ is known to be quasi-strictly accretive, let
\[
C=T^*\begin{bmatrix} 0&0 \\ 0&D\end{bmatrix}T
\]
be a generalized sectorial decomposition. Then
\[
C(C^*)^{\dagger}= T^*\begin{bmatrix}0&0\\0&D^2\end{bmatrix}(T^*)^{-1}.
\]
This means that the phases of $C$, as the phases of eigenvalues of $D$, are the halves of the phases of the nonzero eigenvalues of $C(C^*)^{\dagger}$, taking principal values in $(-\pi, \pi)$.

More generally, we have the following characterization of quasi-sectorial matrices.

\begin{lemma}\label{quasilemma}
A matrix $C$ is quasi-sectorial with phases taking principal values in $(-\frac{\pi}{2}+\alpha,\frac{\pi}{2}+\alpha)$, where $\alpha\in(-\pi,\pi]$, if and only if there exists $\epsilon>0$ such that
\begin{align}
e^{-j\alpha}C+e^{j\alpha}C^*\geq \epsilon C^*C.\label{quasiine}
\end{align}
\end{lemma}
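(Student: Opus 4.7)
The plan is to reduce to the case $\alpha = 0$ by a rotation, and then prove the two directions via the quasi-sectorial decomposition (\ref{quasis}). Since $(e^{-j\alpha}C)^*(e^{-j\alpha}C) = C^*C$, the inequality (\ref{quasiine}) is exactly $\widetilde{C} + \widetilde{C}^* \geq \epsilon \widetilde{C}^*\widetilde{C}$ for $\widetilde{C} := e^{-j\alpha}C$, while $C$ is quasi-sectorial with phases in $(-\tfrac{\pi}{2}+\alpha, \tfrac{\pi}{2}+\alpha)$ iff $\widetilde{C}$ is quasi-strictly accretive. So after replacing $C$ by $\widetilde{C}$ I may assume $\alpha = 0$, and the lemma reduces to: $C$ is quasi-strictly accretive iff there exists $\epsilon>0$ with $C + C^* \geq \epsilon C^*C$.

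For the necessity direction, I would apply (\ref{quasis}) to write $C = U \begin{bmatrix} 0 & 0 \\ 0 & C_s \end{bmatrix} U^*$ with $U$ unitary and $C_s$ sectorial having phases in $(-\pi/2, \pi/2)$; equivalently $C_s$ is nonsingular and strictly accretive, so $C_s + C_s^* \geq \mu I$ for some $\mu > 0$. Since $C_s^*C_s \leq \|C_s\|^2 I$, this yields $C_s + C_s^* \geq \tfrac{\mu}{\|C_s\|^2}\, C_s^*C_s$, and unitary conjugation lifts this block inequality to $C + C^* \geq \epsilon C^*C$ with $\epsilon = \mu/\|C_s\|^2$.

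For the sufficiency direction, set $H := C + C^* - \epsilon C^*C \geq 0$; accretivity of $C$ is immediate from $C + C^* \geq \epsilon C^*C \geq 0$. The key step is to show that $C$ is EP, so that (\ref{quasis}) applies. For any $x \in \ker C$, one has $x^*Hx = x^*(C+C^*)x - \epsilon \|Cx\|^2 = 0$, and since $H \geq 0$ this forces $Hx = 0$ (via $\|H^{1/2}x\|^2 = 0$). Combining $Hx = 0$ with $Cx = 0$ gives $(C + C^*)x = 0$, whence $C^*x = -Cx = 0$. Thus $\ker C \subseteq \ker C^*$, and equal dimensions give $\ker C = \ker C^*$, i.e., $C$ is EP. Writing $C = U\begin{bmatrix} 0 & 0 \\ 0 & C_s\end{bmatrix}U^*$ with $C_s$ nonsingular and conjugating $H \geq 0$ by $U$ reduces the inequality to $C_s + C_s^* \geq \epsilon C_s^* C_s$ on the nonsingular block. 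Since $C_s^*C_s > 0$, this forces $C_s + C_s^* > 0$, so $C_s$ is strictly accretive, hence sectorial with phases in $(-\pi/2, \pi/2)$, and $C$ is quasi-strictly accretive as required.

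The main obstacle I anticipate is the EP step inside sufficiency: the inequality $H \geq 0$ is the only available hypothesis, and it is not a priori obvious why it should align $\ker C$ with $\ker C^*$ (which is precisely what rules out degenerate or Jordan-type blocks in the generalized sectorial decomposition). The resolution is the clean observation that a positive semidefinite $H$ with $x^*Hx = 0$ automatically satisfies $Hx = 0$; once this is in hand, everything else follows mechanically by pulling blocks through the unitary conjugation.
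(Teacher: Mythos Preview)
Your argument is correct. The necessity direction and the overall structure match the paper's proof (the paper simply carries the factor $e^{-j\alpha}$ throughout rather than rotating to $\alpha=0$, which is cosmetic).

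The genuine difference is in the sufficiency direction. The paper observes that $e^{-j\alpha}C+e^{j\alpha}C^*\geq 0$ already makes $C$ semi-sectorial, and then invokes the generalized sectorial decomposition~(\ref{gsd}) to conclude that $C$ is unitarily similar to $\begin{bmatrix}0&0\\0&\tilde{C}\end{bmatrix}$ with $\tilde{C}$ nonsingular; the block inequality then finishes as you do. In contrast, you bypass the semi-sectorial structure theory entirely and establish the EP property ($\ker C=\ker C^*$) directly from the PSD condition on $H=C+C^*-\epsilon C^*C$, via the standard fact that $H\geq 0$ and $x^*Hx=0$ force $Hx=0$. Your route is more self-contained and does not rely on~(\ref{gsd}) or the classification of semi-sectorial matrices; the paper's route is shorter on the page but leans on that background machinery. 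Both arrive at the same nonsingular block $C_s$ and conclude identically.
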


\begin{proof}
We start with the necessity. Since $C$ is quasi-sectorial, it has a decomposition (\ref{quasis}), where $U$ is unitary and $C_s$ is sectorial with phases taking principal values in $(-\frac{\pi}{2}+\alpha,\frac{\pi}{2}+\alpha)$. It follows that $e^{-j\alpha}C_s+e^{j\alpha}C_s^*>0$ and thus there exists a small $\epsilon>0$ such that
\[
e^{-j\alpha}C_s+e^{j\alpha}C_s^*\geq \epsilon C_s^*C_s,
\]
which implies \eqref{quasiine}.

Next, we show the sufficiency. Suppose there exists $\epsilon>0$ such that the inequality (\ref{quasiine}) holds. Then, $e^{-j\alpha}C+e^{j\alpha}C^*\geq 0$, meaning that $C$ is semi-sectorial. Thus, $C$ has a decomposition $C=U\begin{bmatrix}0&0\\0&\tilde{C}\end{bmatrix}U^*$, where $U$ is unitary and $\tilde{C}$ is nonsingular. Then,
\begin{align*}
    e^{-j\alpha}C+e^{j\alpha}C^*&=U\begin{bmatrix}0&0\\0&e^{-j\alpha}\tilde{C}+e^{j\alpha}\tilde{C}^*\end{bmatrix}U^*\geq \epsilon C^*C=U\begin{bmatrix}0&0\\0&\epsilon \tilde{C}^*\tilde{C}\end{bmatrix}U^*.
\end{align*}
This implies that $e^{-j\alpha}\tilde{C}+e^{j\alpha}\tilde{C}^*\geq \epsilon \tilde{C}^*\tilde{C}>0$ and thus $\tilde{C}$ is sectorial with phases taking principal values in $(-\frac{\pi}{2}+
\alpha, \frac{\pi}{2}+\alpha)$.
\end{proof}

Lemma 3.1 says that a matrix is quasi-sectorial if and only if it can be rotated to a quasi-strictly accretive matrix by multiplying $e^{-j\alpha}$. If we can find this rotation, then we can compute the phases of the resulted quasi-strictly accretive matrix using the method above. Adding $\alpha$ to these phases then results the phases of the given quasi-sectorial matrix. Finding the right rotation now is the key issue. Instead of multiplying $e^{-j\alpha}$ as in Lemma \ref{quasilemma}, let us do the rotation by multiplying a nonzero complex number $z=x+jy$ to $C=A+jB$, where $A=\frac{C+C^*}{2}$ and $B=\frac{C-C^*}{2j}$, so to convert condition \eqref{quasiine} to a linear matrix inequality (LMI):
\[
zC + z^*C^* = 2 (xA-yB) \geq \epsilon C^*C
\]
In such a case, the phases of $C$ are the phases of $zC$ minus $\angle z$. Hence the computation of phases boils down to solving an LMI followed by a matrix eigenvalue computation.

The phase computation for a semi-sectorial matrix might be numerically problematic and we leave it for future research.

\section{Simple Properties}

Some properties of the phases of semi-sectorial matrices can be obtained easily from those of sectorial matrices.

First, the phases of the Moore-Penrose generalized inverse $C^\dagger$ can be obtained from those of $C$ easily.

\begin{lemma}\label{lem:inverse}
Let $C\in \mathbb{C}^{n \times n}$ be semi-sectorial with rank $r$. Then
\begin{equation}\label{eq:inverse}
\phi_i(C^\dagger)=-\phi_{r-i+1} (C).
\end{equation}
\end{lemma}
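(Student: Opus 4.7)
The plan is to deduce a generalized sectorial decomposition of $C^\dagger$ directly from one of $C$, and then read off the phases. First, starting from $C=T^*\begin{bmatrix}0&0\\0&N\end{bmatrix}T$ with $T$ invertible and $N\in\mathbb{C}^{r\times r}$ the invertible middle block (generically $N=\diag(D,E)$ as in (\ref{gsd}); in the rotated Hermitian case $N$ is the corresponding diagonal block; in the quasi-sectorial case $T$ may even be taken unitary), I would set $A=T^*\begin{bmatrix}0\\I_r\end{bmatrix}\in\mathbb{C}^{n\times r}$. Then $A$ has full column rank, $C=ANA^*$, and the phases of $C$ can be read directly off of $N$.

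Next I would compute $C^\dagger$ explicitly. A direct verification of the four Moore-Penrose conditions gives
\begin{equation*}
C^\dagger=\tilde{A}N^{-1}\tilde{A}^*,\qquad\tilde{A}:=A(A^*A)^{-1},
\end{equation*}
with $\tilde{A}$ again $n\times r$ of full column rank. Extending $\tilde{A}$ to an invertible $n\times n$ matrix and writing it as $\tilde{T}^*=[\tilde{A}_\perp,\tilde{A}]$, the above becomes $C^\dagger=\tilde{T}^*\begin{bmatrix}0&0\\0&N^{-1}\end{bmatrix}\tilde{T}$, which already has the overall shape of a gsd.

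The remaining step is to recognize $N^{-1}$, after a unitary congruence that can be absorbed into $\tilde{T}$, as the canonical middle block for a gsd of $C^\dagger$. In the generic case $N^{-1}=\diag(D^{-1},E^{-1})$: the entries of $D^{-1}$ are $e^{-j\theta_k}$, which form a decreasing sequence once permuted, and each block $e^{-j\theta_0}\tbt{1}{-2}{0}{1}$ in $E^{-1}$ is unitarily congruent via $\diag(1,-1)$ to the canonical $e^{-j\theta_0}\tbt{1}{2}{0}{1}$. This identifies the phase center of $C^\dagger$ as $-\theta_0$ and its phases as $\{-\theta_m,\ldots,-\theta_1\}$ together with $(r-m)/2$ copies each of $-\theta_0\pm\pi/2$. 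The rotated Hermitian and quasi-sectorial sub-cases are analogous, without the nilpotent-like blocks. Listing these in decreasing order and matching them against $-\phi_r(C),\ldots,-\phi_1(C)$ gives (\ref{eq:inverse}).

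The hardest part will be the canonical-form bookkeeping: verifying that the inverse of each Jordan-like block is \emph{unitarily} congruent (not merely similar) to the canonical block, and that the reordering and unitary embedding combine into a genuine gsd rather than something that merely resembles one.
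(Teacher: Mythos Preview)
Your approach is correct. The paper itself states this lemma without proof, merely remarking that such properties ``can be obtained easily from those of sectorial matrices'' in \cite{WCKQ2020}; so there is no argument in the text to compare yours against. Writing $C=ANA^*$ with $A$ of full column rank and $N$ the invertible gsd block, obtaining $C^\dagger=\tilde{A}N^{-1}\tilde{A}^*$ with $\tilde{A}=A(A^*A)^{-1}$ by direct verification of the four Moore--Penrose conditions, and then recognizing $N^{-1}$ (after a unitary congruence absorbed into $\tilde{T}$) as the canonical gsd block with phase center $-\theta_0$, is a clean way to fill in the details. The point you single out as requiring care---that $\diag(1,-1)$ carries $e^{-j\theta_0}\tbt{1}{-2}{0}{1}$ to the canonical $e^{-j\theta_0}\tbt{1}{2}{0}{1}$ by \emph{unitary} congruence---is the only step that is not purely formal, and you have it right. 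The remaining bookkeeping (reordering the $-\theta_k$ and matching the $(r-m)/2$ copies of $-\theta_0\pm\pi/2$) is routine.
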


Next, the set of phase bounded semi-sectorial matrices
\begin{equation*}
\mathcal{C}[\alpha, \beta] =
\left\{C\in \mathbb{C}^{n\times n}: C \text{ is semi-sectorial and } [\underline{\phi}(C), \overline{\phi}(C)] \subset [\alpha, \beta] \right\}
\end{equation*}
where $0\leq \beta-\alpha<2\pi$, is a cone. The following lemma, which can be proved using the same way as for the sectorial case \cite{WCKQ2020}, implies that if $\beta-\alpha\leq \pi$, then $\mathcal{C}[\alpha, \beta]$ is a closed convex cone.

\begin{lemma}
\label{convexcone}
If $\beta-\alpha\leq \pi$, $A, B \in \mathcal{C}[\alpha, \beta]$, then $A+B \in \mathcal{C}[\alpha, \beta]$.
\end{lemma}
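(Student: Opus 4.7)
The plan is to reduce Lemma \ref{convexcone} to an elementary containment of convex cones. Define
\[
S_{\alpha,\beta}:=\{re^{j\theta}:r\geq 0,\ \alpha\leq\theta\leq\beta\}\cup\{0\}.
\]
When $\beta-\alpha\leq\pi$ this is a closed convex cone, so $S_{\alpha,\beta}+S_{\alpha,\beta}=S_{\alpha,\beta}$. The core fact I would verify is the equivalence
\[
C\in\mathcal{C}[\alpha,\beta]\quad\Longleftrightarrow\quad W'(C)\subseteq S_{\alpha,\beta}.
\]

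For this equivalence, I would exploit the generalized sectorial decomposition (\ref{gsd}) together with the invariance $W'(T^*CT)=W'(C)$ under nonsingular congruence. Then $W'(C)$ agrees with the angular numerical range of the middle block-diagonal factor. The diagonal block $D$ contributes the conic hull of the rays at angles $\theta_1,\ldots,\theta_m$. Each Jordan block $e^{j\theta_0}\begin{bmatrix}1&2\\0&1\end{bmatrix}$ has numerical range equal to the closed unit disk centered at $e^{j\theta_0}$ (a short computation on $\begin{bmatrix}1&2\\0&1\end{bmatrix}$ gives $\{z:|z-1|\leq 1\}$, then rotate), whose closed conic hull is the half-plane with supporting rays at $\theta_0\pm\pi/2$. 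Assembling the blocks, the closure of $W'(C)$ is precisely the closed sector between the supporting rays at $\overline{\phi}(C)$ and $\underline{\phi}(C)$, from which the equivalence reads off.

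Given the equivalence, the lemma itself is a one-liner. For every nonzero $x$,
\[
x^*(A+B)x=x^*Ax+x^*Bx\in W'(A)+W'(B)\subseteq S_{\alpha,\beta}+S_{\alpha,\beta}=S_{\alpha,\beta},
\]
so $W'(A+B)\subseteq S_{\alpha,\beta}$, and the equivalence places $A+B$ back in $\mathcal{C}[\alpha,\beta]$. The trivial possibility $A+B=0$ is absorbed by the conventions $\overline{\phi}(0)=-\infty$ and $\underline{\phi}(0)=+\infty$ already fixed earlier in the paper.

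The main obstacle is the equivalence in the first step, specifically the boundary picture of $W'(C)$: one must pin down that each Jordan-block part contributes exactly the supporting rays at $\theta_0\pm\pi/2$ (not a thinner sector), so that the hypothesis $W'(C)\subseteq S_{\alpha,\beta}$ genuinely forces all phases, including the Jordan-block phases $\theta_0\pm\pi/2$, into $[\alpha,\beta]$. Once this numerical-range calculation is in hand, the remainder is the direct convex-cone analogue of the sectorial proof in \cite{WCKQ2020} referenced in the lemma statement.
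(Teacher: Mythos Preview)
Your proposal is correct and follows the same convex-cone/angular-numerical-range argument that the paper invokes by reference to \cite{WCKQ2020} for the sectorial case; the only addition here is your explicit verification that the Jordan blocks in the generalized sectorial decomposition contribute exactly the rays at $\theta_0\pm\pi/2$, which is what is needed to carry the equivalence $C\in\mathcal{C}[\alpha,\beta]\Leftrightarrow W'(C)\subseteq S_{\alpha,\beta}$ over from sectorial to semi-sectorial matrices. In fact, since the paper already identifies $\overline{\phi}(C)$ and $\underline{\phi}(C)$ with the angles of the supporting rays of $W(C)$ (Fig.~\ref{fig1} and the surrounding discussion), the block-by-block computation you outline could be shortened by appealing to that geometric interpretation directly.
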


Another property concerns the phases of the compression of a matrix. For $C\in\mathbb{C}^{n\times n}$, the matrix $\tilde{C}\!=\!X^*CX$, where $X\!\in\!\mathbb{C}^{n\times k}$ has full column rank, is said to be a compression of $C$. The phases of $\tilde{C}$ and those of $C$ have the following interlacing property, which is a simple extension of \cite[Lemma 7]{FurtadoJohnson2003}.
\begin{lemma}
\label{lemma:semi-compression}
Let $C\in\mathbb{C}^{n\times n}$ be a nonzero semi-sectorial matrix and $\tilde{C}$ be a nonzero compression of $C$. Denote $r=\mathrm{rank} (C)$ and $\tilde{r}=\mathrm{rank} (\tilde{C})$. Then $\tilde{C}$ is semi-sectorial and
\begin{align}\label{eq:interlacing}
\phi_j(C) \geq \phi_j(\tilde{C}) \geq \phi_{r-\tilde{r}+j}(C),\ j=1,\dots, \tilde{r}.
\end{align}
\end{lemma}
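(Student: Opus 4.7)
The plan is first to verify that $\tilde{C}$ is semi-sectorial, and then to derive the two interlacing inequalities via an inertia characterization of phases combined with Cauchy interlacing. The semi-sectorial property is immediate: since $X$ has full column rank, every nonzero $y \in \mathbb{C}^k$ gives $Xy \neq 0$, whence $y^*\tilde{C}y = (Xy)^*C(Xy) \in W'(C)$. Therefore $W'(\tilde{C}) \subseteq W'(C)$, so $\delta(\tilde{C}) \leq \delta(C) \leq \pi$, and the phases of $\tilde{C}$ may be chosen within the closed arc $[\gamma(C) - \pi/2, \gamma(C) + \pi/2]$ of $C$.

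For the interlacing, I would introduce the Hermitian matrix $K_\alpha(M) := \frac{1}{2j}(e^{-j\alpha}M - e^{j\alpha}M^*)$ for real $\alpha$. Using the generalized sectorial decomposition~\eqref{gsd}, a block-by-block computation shows that for every $\alpha$ in the open arc $(\gamma(C) - \pi/2, \gamma(C) + \pi/2)$, the null block contributes only zero eigenvalues; each diagonal entry $e^{j\theta_i}$ of $D$ contributes $\sin(\theta_i - \alpha)$, whose sign records whether $\theta_i$ lies above or below $\alpha$; and each Jordan-like $2 \times 2$ summand of $E$ contributes the pair $\sin(\theta_0 - \alpha) \pm 1$, one strictly positive and one strictly negative, matching its two phases $\theta_0 \pm \pi/2$ which straddle $\alpha$. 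Consequently $n^+(K_\alpha(C))$ equals the number of phases of $C$ strictly greater than $\alpha$, and $n^-(K_\alpha(C))$ equals the number strictly less than $\alpha$.

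The key observation is that $K_\alpha(\tilde{C}) = X^* K_\alpha(C) X$; writing $X = QR$ via QR decomposition, $K_\alpha(\tilde{C})$ is congruent to the orthogonal compression $Q^* K_\alpha(C) Q$, so Cauchy interlacing together with Sylvester's law of inertia yields $n^+(K_\alpha(\tilde{C})) \leq n^+(K_\alpha(C))$ and $n^-(K_\alpha(\tilde{C})) \leq n^-(K_\alpha(C))$. To prove $\phi_j(C) \geq \phi_j(\tilde{C})$, I suppose the contrary and pick $\alpha$ strictly between $\phi_j(C)$ and $\phi_j(\tilde{C})$; both endpoints lie in the closed phase arc of $C$, so $\alpha$ lies in the open arc, whence $n^+(K_\alpha(\tilde{C})) \geq j > n^+(K_\alpha(C))$, a contradiction. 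The bound $\phi_j(\tilde{C}) \geq \phi_{r-\tilde{r}+j}(C)$ follows symmetrically by counting negative inertias, via $n^-(K_\alpha(\tilde{C})) \geq \tilde{r} - j + 1 > \tilde{r} - j \geq n^-(K_\alpha(C))$. The main obstacle is the careful bookkeeping of the positive, negative, and zero inertia contributions of the Jordan-like summands in \eqref{gsd}, and checking that the choice of $\alpha$ always falls in the open arc where the inertia characterization is exact; once that is pinned down, everything else is a routine application of Cauchy interlacing.
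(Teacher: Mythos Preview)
Your argument is correct and entirely self-contained, whereas the paper does not actually prove this lemma: it merely states that the result ``is a simple extension of \cite[Lemma 7]{FurtadoJohnson2003}'' and moves on. So there is no in-paper proof to compare against beyond that citation.

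Your route via the rotated skew-Hermitian form $K_\alpha(M)=\frac{1}{2j}(e^{-j\alpha}M-e^{j\alpha}M^*)$, reading off phase counts from inertias and then invoking Cauchy interlacing, is a clean and direct way to obtain \eqref{eq:interlacing}; it has the virtue of not depending on the Furtado--Johnson machinery at all. One small point worth making explicit when you write it up: after establishing the identity $n^{\pm}(K_\alpha(C))=|\{i:\phi_i(C)\gtrless\alpha\}|$ for $C$, you tacitly use the same identity for $\tilde{C}$ at the same $\alpha$. This is legitimate, but deserves a sentence. Since you have already placed the phases of $\tilde{C}$ inside $[\gamma(C)-\pi/2,\gamma(C)+\pi/2]$, there are two cases: if $\tilde{C}$ has no $E$-blocks, each diagonal contribution $\sin(\tilde\theta_i-\alpha)$ has the correct sign because $|\tilde\theta_i-\alpha|<\pi$; if $\tilde{C}$ does have $E$-blocks, then $\overline{\phi}(\tilde{C})-\underline{\phi}(\tilde{C})=\pi$, which together with $[\underline{\phi}(\tilde{C}),\overline{\phi}(\tilde{C})]\subset[\gamma(C)-\pi/2,\gamma(C)+\pi/2]$ forces $\gamma(\tilde{C})=\gamma(C)$, so the open arcs coincide and the $E$-block eigenvalues $\sin(\gamma(\tilde{C})-\alpha)\pm 1$ are strictly of opposite sign. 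With that remark inserted, your proof is complete.
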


By exploiting the properties of the phases of compressions of a semi-sectorial matrix, we can derive the phases of the generalized Schur complement of a semi-sectorial matrix. Let $C\in\mathbb{C}^{n\times n}$ be partitioned as
$C=\begin{bmatrix}C_{11}&C_{12}\\C_{21}&C_{22}\end{bmatrix}$, where $C_{11}\in\mathbb{C}^{k\times k}$. The generalized Schur complement \cite{ZhangFuzhen2006Schur} of $C_{11}$ in $C$, denoted by $C\slash_{11}$, is defined as $C\slash_{11}=C_{22}-C_{21}C_{11}^{\dagger}C_{12}$. Denoted by $\mathcal{R}(C)$ the range of matrix $C$.
\begin{lemma}
\label{lem:schur}
Let $C=\begin{bmatrix}C_{11}&C_{12}\\C_{21}&C_{22}\end{bmatrix}$ be semi-sectorial and $C\slash_{11}$ be the generalized Schur complement of $C_{11}$ in $C$. Denote $r=\mathrm{rank} (C)$ and $\hat{r}=\mathrm{rank} (C\slash_{11})$. If $\mathcal{R}(C_{12})\subset\mathcal{R}(C_{11})$ and $\mathcal{R}(C_{21}^*)\subset\mathcal{R}(C_{11}^*)$, then $C\slash_{11}$ is semi-sectorial and
\begin{equation}
\label{eq:schur}
\phi_j(C) \geq \phi_j(C\slash_{11}) \geq \phi_{r-\hat{r}+j}(C),\ j=1,\dots, \hat{r}.
\end{equation}
\end{lemma}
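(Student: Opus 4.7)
The plan is to realize the generalized Schur complement $C/_{11}$ as a compression of $C$, at which point Lemma~\ref{lemma:semi-compression} delivers both the semi-sectoriality of $C/_{11}$ and the interlacing \eqref{eq:schur} immediately. The classical trick is the block-triangular reduction that annihilates $C_{12}$ and $C_{21}$, but now carried out with the Moore--Penrose inverse in place of an ordinary inverse; the range hypotheses are exactly what is needed to make this reduction work despite $C_{11}$ being possibly singular.

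Concretely, with $C_{11}\in\mathbb{C}^{k\times k}$, I would set
\[
X=\begin{bmatrix} -C_{11}^{\dagger}C_{12} \\ I_{n-k} \end{bmatrix}\in\mathbb{C}^{n\times(n-k)}.
\]
Because the bottom block is the identity, $X$ automatically has full column rank, so $X^{*}CX$ qualifies as a compression in the sense of Lemma~\ref{lemma:semi-compression}. A direct block computation, using that $C_{11}C_{11}^{\dagger}$ is the orthogonal projector onto $\mathcal{R}(C_{11})$ and the hypothesis $\mathcal{R}(C_{12})\subset\mathcal{R}(C_{11})$ to obtain $C_{11}C_{11}^{\dagger}C_{12}=C_{12}$, yields
\[
CX=\begin{bmatrix} -C_{11}C_{11}^{\dagger}C_{12}+C_{12} \\ -C_{21}C_{11}^{\dagger}C_{12}+C_{22} \end{bmatrix}=\begin{bmatrix} 0 \\ C/_{11} \end{bmatrix},
\]
and therefore $X^{*}CX=C/_{11}$. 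The symmetric hypothesis $\mathcal{R}(C_{21}^{*})\subset\mathcal{R}(C_{11}^{*})$ would play the dual role if one instead annihilated $C_{21}$ by left multiplication, and in particular secures the full block factorization $C=L\,\mathrm{diag}(C_{11},C/_{11})\,L^{\#}$ underlying the construction.

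With $C/_{11}$ identified as a compression of $C$, Lemma~\ref{lemma:semi-compression} applied to $C$ and $\tilde{C}=C/_{11}$ (with $\tilde{r}=\hat{r}$) gives semi-sectoriality of $C/_{11}$ and the inequalities $\phi_j(C)\geq\phi_j(C/_{11})\geq\phi_{r-\hat{r}+j}(C)$ for $j=1,\dots,\hat{r}$, which is exactly \eqref{eq:schur}. The degenerate case $\hat{r}=0$ (i.e.\ $C/_{11}=0$) is vacuous. The only step demanding a bit of care is the verification $X^{*}CX=C/_{11}$: when expanded fully, two Moore--Penrose cross terms $C_{12}^{*}(C_{11}^{\dagger})^{*}C_{12}$ and $-C_{12}^{*}(C_{11}^{\dagger})^{*}C_{11}C_{11}^{\dagger}C_{12}$ must cancel, and this cancellation is precisely what the range condition $\mathcal{R}(C_{12})\subset\mathcal{R}(C_{11})$ ensures. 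No further effort is required — the main work has already been done in Lemma~\ref{lemma:semi-compression}.
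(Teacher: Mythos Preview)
Your argument is correct and takes a genuinely different route from the paper's proof. The paper does \emph{not} realize $C/_{11}$ as a compression of $C$ directly; instead it invokes a block formula for the Moore--Penrose inverse (under both range hypotheses, $(C/_{11})^{\dagger}$ appears as the lower-right block of $C^{\dagger}$), applies the compression interlacing of Lemma~\ref{lemma:semi-compression} to $C^{\dagger}$ and its principal submatrix $(C/_{11})^{\dagger}$, and then converts the resulting inequalities back to $C$ and $C/_{11}$ via Lemma~\ref{lem:inverse}. Your approach, by contrast, goes straight to $C$: the block-triangular matrix $X=\begin{bmatrix}-C_{11}^{\dagger}C_{12}\\ I\end{bmatrix}$ has full column rank, and the single range condition $\mathcal{R}(C_{12})\subset\mathcal{R}(C_{11})$ forces $C_{11}C_{11}^{\dagger}C_{12}=C_{12}$, whence $CX=\begin{bmatrix}0\\ C/_{11}\end{bmatrix}$ and $X^{*}CX=C/_{11}$, so Lemma~\ref{lemma:semi-compression} applies immediately.

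Two remarks. First, once you have $CX=\begin{bmatrix}0\\ C/_{11}\end{bmatrix}$, the product $X^{*}CX=C/_{11}$ is immediate; the ``cancellation of cross terms'' you describe only arises if one expands $X^{*}CX$ without first simplifying $CX$. Second, your argument actually uses only the hypothesis $\mathcal{R}(C_{12})\subset\mathcal{R}(C_{11})$; the dual condition $\mathcal{R}(C_{21}^{*})\subset\mathcal{R}(C_{11}^{*})$ is never invoked. The paper's detour through $C^{\dagger}$ requires both conditions for the block Moore--Penrose formula, so your route is both more elementary (no external citation, no appeal to Lemma~\ref{lem:inverse}) and nominally stronger.
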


\begin{proof}
Since $\mathcal{R}(C_{12})\subset \mathcal{R}(C_{11})$, $\mathcal{R}(C_{21}^*)\subset\mathcal{R}(C_{11}^*)$, it follows from \cite{burns1974} that $C^{\dagger}=\begin{bmatrix} ? & ? \\ ? & (C\slash_{11})^\dagger
\end{bmatrix}$. By Lemmas \ref{lem:inverse} and \ref{lemma:semi-compression},  $(C\slash_{11})^{\dagger}$ is semi-sectorial and so is $C\slash_{11}$. In view of \eqref{eq:interlacing}, we have
\[
\phi_j(C^{\dagger}) \geq \phi_j((C\slash_{11})^\dagger) \geq \phi_{\mathrm{rank} (C^\dagger)-\mathrm{rank}((C\slash_{11})^\dagger)+j} (C^{\dagger}),
\]
for $j=1,\dots, \mathrm{rank}((C\slash_{11})^\dagger).$
Obviously $\mathrm{rank}(C^{\dagger})=r$, $\mathrm{rank}((C\slash_{11})^\dagger)=\hat{r}$. In view of \eqref{eq:inverse}, we obtain \eqref{eq:schur}.
\end{proof}

Note that if $C$ is a quasi-sectorial matrix in Lemma \ref{lem:schur}, it can be inferred from \cite[Lemma 2.4]{mitra1983} that there naturally hold $\mathcal{R}(C_{12})\subset \mathcal{R}(C_{11})$ and $\mathcal{R}(C_{21}^*)\subset\mathcal{R}(C_{11}^*)$. This means that for quasi-sectorial $C$, the generalized Schur complement $C\slash_{11}$ is quasi-sectorial and \eqref{eq:schur} holds.

\section{Matrix Product}

Given two vectors $x,y\in \mathbb{R}^n$, denote by $x^\downarrow$ and $y^\downarrow$ the rearranged
versions of $x$ and $y$, respectively, in which their elements are sorted in a non-increasing order. Then, $x$ is said to be majorized by $y$, denoted by $x\prec y$, if
\begin{align*}
\sum_{i=1}^k x^\downarrow_i\leq\sum_{i=1}^k y^\downarrow_i,\ k=1,\dots, n-1,\quad \text{and} \quad
\sum_{i=1}^n x^\downarrow_i=\sum_{i=1}^n y^\downarrow_i.
\end{align*}

If $A, B \in \mathbb{C}^{n\times n}$ are sectorial matrices, it is known from \cite{WCKQ2020} that
\[
\angle\lambda(C) \prec \phi(A)+\phi(B).
\]
where $\lambda(C)$ is the vector of the $n$ eigenvalues of $C$ and $\angle$ is applied element-wise. In this section, we will extend this majorization relation to the case
when $A$ and $B$ are semi-sectorial. The difficulty is that now the matrices $A$ and $B$ may have different numbers of phases and $AB$ may have another different number of nonzero eigenvalues. Let us denote the vector of nonzero eigenvalues of matrix $C$ as $\lambda_{\neq 0}(C)$.

\begin{example}
Let
\[
A=\begin{bmatrix} 0 & 0 \\
0 & 1
\end{bmatrix}, \ \ \ B=\begin{bmatrix} 0 & -1 \\
1 & 0
\end{bmatrix}.
\]
Then $A$ has 1 phase, $B$ has 2 phases, and $AB$ has 0 nonzero eigenvalues.
Also notice that $\mathrm{rank} (AB) = 1$ while $\mathrm{rank} (AB)^2 = 0$.
\end{example}



\begin{theorem}
\label{thm:semi-major}
Let $A, B\in\mathbb{C}^{n\times n}$ be quasi-sectorial and semi-sectorial with phase centers $\gamma(A)$ and $\gamma(B)$ respectively. Let $C=AB$. Then
\begin{enumerate}
\item $C$ has $r\!=\!\mathrm{rank} (C^2)$ nonzero eigenvalues $\lambda_{\neq 0}(C) \!\!=\!\! \begin{bmatrix}\lambda_1(C) \!\!&\!\cdots\!&\!\! \lambda_r(C)\end{bmatrix}$,

\item $\angle \lambda_i (C)$ can take values so that
\begin{equation*}
\gamma(A)+\gamma(B)-\pi
< \angle \lambda_r (C) \leq \dots \leq \angle \lambda_1 (C)
< \gamma(A)+\gamma(B)+\pi,
\end{equation*}
\item there exists an $n \times r$ isometry $U$ such that $U^*AU$ is sectorial, $U^*BU$ is nonsingular semi-sectorial and
$\angle\lambda_{\neq 0} (C) \prec \phi(U^*AU)+\phi(U^*BU)$.
\end{enumerate}
\end{theorem}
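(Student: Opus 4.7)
The plan is to reduce, by nested compressions, to a product of two sectorial matrices of size $r$, and then invoke the sectorial majorization theorem of \cite{WCKQ2020} after a small perturbation. I would start with the quasi-sectorial decomposition $A=V_A A_1 V_A^*$, where $V_A\in\mathbb{C}^{n\times r_A}$ is an isometry onto $\mathcal{R}(A)$ and $A_1$ is invertible sectorial. Setting $B_1=V_A^* B V_A$ gives a semi-sectorial matrix by Lemma \ref{lemma:semi-compression}, and the cyclic property collapses the spectrum to $\lambda_{\neq 0}(AB)=\lambda_{\neq 0}(A_1 B_1)$.

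For Part 1, expanding $(AB)^k$ in the $V_A$-adapted block partition shows that its rank is controlled by $(A_1 B_1)^k$. Now $A_1 B_1$ has all its $0$-Jordan blocks of size one: $\ker(A_1 B_1)=\ker B_1$ since $A_1$ is invertible, and a gsd of $B_1$ places $A_1 B_1$ in a block-triangular form from which the algebraic multiplicity of $0$ is read off as $r_A-\mathrm{rank}(B_1)$. Hence $\mathcal{R}((A_1 B_1)^k)=\mathcal{R}(A_1 B_1)$ for every $k\geq 1$, and both nonzero blocks in $(AB)^k$ for $k\geq 2$ have their column spaces in this range, giving $\mathrm{rank}((AB)^k)=\mathrm{rank}(B_1)$ and thus $r=\mathrm{rank}(C^2)=\mathrm{rank}(B_1)$.

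For Part 3, set $S=\mathcal{R}(B_1)\subset\mathbb{C}^{r_A}$, choose an isometry $V\in\mathbb{C}^{r_A\times r}$ with $\mathcal{R}(V)=S$, and put $U=V_A V$. Then $U^*AU=V^*A_1 V$ is sectorial as a compression of $A_1$. To see $U^*BU=V^*B_1 V$ is nonsingular, I would invoke the fact that any accretive matrix $M$ satisfies $\ker M=\ker M^*$ (write $M=H+jK$ with $H\geq 0$; then $Mx=0$ forces $Hx=Kx=0$). Applied to $e^{-j\gamma(B)}B_1$, this yields $\ker B_1=\ker B_1^*=S^\perp$, and then $V^*B_1 V x=0$ forces $B_1 Vx\in S\cap S^\perp=\{0\}$, so $Vx\in S\cap\ker B_1=\{0\}$. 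Letting $V_\perp$ be an isometry onto $S^\perp=\ker B_1$, the identity $B_1 V_\perp=0$ makes $A_1 B_1$ block lower-triangular in the basis $[V\ V_\perp]$, so $\lambda_{\neq 0}(A_1 B_1)=\lambda(V^*A_1 B_1 V)$; and $VV^*B_1=B_1$ (as $\mathcal{R}(B_1)=S$) then gives $V^*A_1 B_1 V=(V^*A_1 V)(V^*B_1 V)=(U^*AU)(U^*BU)$.

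To close, I would perturb $U^*BU$ to $(U^*BU)+\epsilon e^{j\gamma(B)}I$, which is sectorial of size $r$ for every $\epsilon>0$; the sectorial majorization theorem of \cite{WCKQ2020} applies to its product with $U^*AU$, and letting $\epsilon\to 0$ the eigenvalues and phases vary continuously and majorization is preserved under limits, yielding Part 3. Part 2 then follows by comparing the extremal entries on each side, with the strict outer bounds $\gamma(A)+\gamma(B)\pm\pi$ coming from $A$ being quasi-sectorial (so $\overline{\phi}(U^*AU)<\gamma(A)+\pi/2$ and symmetrically at the bottom). The main obstacle I anticipate is the bookkeeping: the single isometry $U=V_A V$ must simultaneously make $U^*AU$ sectorial, $U^*BU$ nonsingular, and $A_1 B_1$ block lower-triangular in $[V\ V_\perp]$ -- all three roles hinging on the choice $\mathcal{R}(V)=\mathcal{R}(B_1)=(\ker B_1)^\perp$, which in turn rests on the accretive identity $\ker B_1=\ker B_1^*$.
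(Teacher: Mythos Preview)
Your proposal is correct and follows essentially the same route as the paper: a two-stage compression, first onto $\mathcal{R}(A)$ and then onto $\mathcal{R}(B_1)$, followed by a sectorial perturbation of the inner block and an appeal to the majorization theorem for sectorial matrices, with a limit at the end. The composite isometry $U=V_A V$ that you build is exactly the paper's $U=U_1U_2$.

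The only organisational difference is in Part~1. You argue that $A_1B_1$ has index~$1$ at~$0$ (via $\ker B_1=\ker B_1^*$, hence $\mathcal{R}(B_1)\perp\ker B_1$, hence the block form $\bigl[\begin{smallmatrix}A_2B_2 & 0\\ * & 0\end{smallmatrix}\bigr]$ with $A_2B_2$ invertible), and then read off $\mathrm{rank}(C^k)=\mathrm{rank}(B_1)$ for $k\ge 2$ from the range stabilisation $\mathcal{R}((A_1B_1)^k)=\mathcal{R}(A_1B_1)$. The paper instead writes out the full $3\times 3$ block form of $C$ and $C^2$ and computes the rank directly. Both arrive at the same conclusion $\mathrm{Ind}(C)\le 2$; your phrasing makes the mechanism (index~$1$ of $A_1B_1$) slightly more transparent, while the paper's is more self-contained. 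Your perturbation $\epsilon e^{j\gamma(B)}I$ differs cosmetically from the paper's $\epsilon e^{j\gamma(U^*BU)}I$, but either makes $e^{-j\gamma(B)}(U^*BU)+\epsilon I$ strictly accretive and hence sectorial, so both work.
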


\begin{proof}
It is well known $C$ has $\mathrm{rank} (C^k)$ nonzero eigenvalues for each $k\geq \mathrm{Ind} (C)$ which is the index of $C$. In our particular case, we will show that $\mathrm{Ind} (C) \leq 2$.
Let $U_1$ be an isometry onto $\mathcal {R} (A)$ and $U_1^\perp$ be an isometry onto $\mathcal{N}(A)$. Then
\begin{align*}
\begin{bmatrix} U_1^\perp & U_1 \end{bmatrix}^* A \begin{bmatrix} U_1^\perp & U_1 \end{bmatrix}
& =\begin{bmatrix} 0& 0 \\ 0 & A_1 \end{bmatrix}, \\
\begin{bmatrix} U_1^\perp & U_1 \end{bmatrix}^* B \begin{bmatrix} U_1^\perp & U_1 \end{bmatrix}
& =\begin{bmatrix} ? & ? \\ X & B_1 \end{bmatrix} .
\end{align*}
The nonzero eigenvalues of $C$ are the same as the nonzero eigenvalues of $A_1B_1$. Since $A$ is quasi-sectorial and $B$ is semi-sectorial, it follows that $A_1$ is sectorial and $B_1$ is semi-sectorial.
Let $U_2$ be an isometry onto $\mathcal {R} (B_1)$ and $U_2^\perp$ be an isometry onto $\mathcal{N}(B_1)$. Then
\begin{align*}
\begin{bmatrix} U_2^\perp & U_2 \end{bmatrix}^* A_1 \begin{bmatrix} U_2^\perp & U_2 \end{bmatrix}
& =\begin{bmatrix} ? & Y \\ ? & A_2 \end{bmatrix}, \\
\begin{bmatrix} U_2^\perp & U_2 \end{bmatrix}^* B_1 \begin{bmatrix} U_2^\perp & U_2 \end{bmatrix}
& =\begin{bmatrix} 0 & 0 \\ 0 & B_2 \end{bmatrix}.
\end{align*}
Since $A_1$ is sectorial and $B_1$ is semi-sectorial, it follows that $A_2$ is sectorial and $B_2$ is semi-sectorial. By the non-singularity of $A_2$ and $B_2$, we see that the nonzero eigenvalues of $A_1B_1$, and hence those of $AB$, are exactly the eigenvalues of $A_2B_2$. Note that $C$ is unitarily similar to
\[
\begin{bmatrix}
0 & 0 & 0 \\ Z_1 & 0 & YB_2 \\ Z_2 & 0 & A_2B_2
\end{bmatrix},
\]
where $\begin{bmatrix} {Z_1} \\ {Z_2}\end{bmatrix}  \!=\! A_1X$. Hence $C^2$ is unitarily similar to
\[
\begin{bmatrix} 0 & 0 & 0 \\ YB_2Z_2 & 0 & YB_2 A_2B_2\\ A_2B_2 Z_2 & 0 & (A_2B_2)^2 \end{bmatrix}.
\]
Since $A_2B_2$ is nonsingular, we have
\[
\mathrm{rank} (C^2) = \mathrm{rank} \begin{bmatrix} YB_2Z_2 & YB_2 A_2B_2\\ A_2B_2 Z_2 & (A_2B_2)^2 \end{bmatrix} = \mathrm{rank} (A_2B_2).
\]
This shows item 1.

Let $U=\mathcal{R}(U_1U_2)$. Then $A_2B_2= (U^*AU)(U^* BU)$. Here $U^*AU$ is sectorial and $U^*BU$ is semi-sectorial but nonsingular. Since they are compressions, it follows from Lemma \ref{lemma:semi-compression} that
$[\underline{\phi}(U^*AU), \overline{\phi}(U^*AU)] \subset [\underline{\phi}(A), \overline{\phi}(A)]$ and
$[\underline{\phi}(U^*BU), \overline{\phi}(U^*BU)] \subset [\underline{\phi}(B), \overline{\phi}(B)]$.

Let $B_\epsilon =U^*BU + \epsilon e^{j\gamma(U^*BU)}I$ for some $\epsilon >0$, then $B_\epsilon$ is sectorial and $\lim_{\epsilon \downarrow 0} B_\epsilon= U^*BU$. By continuity, $\lim_{\epsilon \downarrow 0} \phi(B_\epsilon) = \phi(U^*BU)$.
It follows from Theorem 6.2 in \cite{WCKQ2020} that $\lambda (U^*AU B_\epsilon)$
can take values in
\begin{equation*}
[\underline{\phi}(U^*AU)+\underline{\phi}(U^*BU), \overline{\phi}(U^*AU)+\overline{\phi}(B_\epsilon)]
\subset (\gamma(A)+\gamma(B_\epsilon)-\pi, \gamma(A)+\gamma(B_\epsilon)+\pi)
\end{equation*}
and
\[
\angle \lambda (U^*AU B_\epsilon) \prec \phi(U^*AU) + \phi(B_\epsilon).
\]
Taking limits in both sides, we get
\[
\angle \lambda (U^*AU U^*BU) \prec \phi(U^*AU) + \phi(U^*BU)
\]
that shows items 2 and 3.
\end{proof}

It is worth noting that item 3 in Theorem 5.1 also implies
\[
\underline{\phi}(A)+ \underline{\phi} (B) \leq \angle \lambda_i(C) \leq \overline{\phi}(A)+ \overline{\phi}(B) .
\]

By taking $A=I$ and $B=C$ in the above theorem, we obtain a majorization relation between the phases of nonzero eigenvalues and the matrix phases of a semi-sectorial matrices.

\begin{corollary}
Let $C$ be semi-sectorial. Then $C$ has $r=\mathrm{rank} (C)$ nonzero eigenvalues and there exists an $n \times r$ isometry $U$ such that $U^*CU$ is nonsingular semi-sectorial and
\[
\angle \lambda_{\neq 0} (C) \prec \phi(U^*CU).
\]
\end{corollary}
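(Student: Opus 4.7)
The plan is to specialize Theorem \ref{thm:semi-major} to $A = I$ and $B = C$. The identity matrix is sectorial (its numerical range is the singleton $\{1\}$), hence quasi-sectorial, with all phases equal to zero, so paired with the semi-sectorial $C$ it satisfies the hypotheses of the theorem for the product $AB = C$.

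Item 3 of the theorem then produces an isometry $U \in \mathbb{C}^{n \times r}$ for which $U^*IU = I_r$ is sectorial with $\phi(I_r) = 0$, $U^*CU$ is nonsingular semi-sectorial, and
\[
\angle\lambda_{\neq 0}(C) \prec \phi(U^*IU) + \phi(U^*CU) = \phi(U^*CU),
\]
which is precisely the desired majorization. So the existence of $U$ and the majorization are immediate from the theorem.

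The only loose end is reconciling the count of nonzero eigenvalues: Theorem \ref{thm:semi-major} gives $r = \mathrm{rank}(C^2)$, whereas the corollary requires $r = \mathrm{rank}(C)$. The main (and I think only) step needing genuine verification is that every semi-sectorial matrix has index at most one, i.e., $\mathrm{rank}(C) = \mathrm{rank}(C^2)$. I would establish this via the generalized sectorial decomposition (\ref{gsd}): write $C = T^* \diag(0_{n-r}, D, E)\, T$ with $\tilde M := \diag(D,E)$ nonsingular, and set $S := TT^* \succ 0$. Then $C^2 = T^*(MSM)T$, whose only nonzero block is $\tilde M S_{22} \tilde M$, a product of an invertible matrix with a positive definite principal submatrix of $S$; this block is therefore invertible, giving $\mathrm{rank}(C^2) = r$.

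I do not expect any serious obstacle: the substantive work lies in Theorem \ref{thm:semi-major}, and the rank identification is a one-line block computation from (\ref{gsd}). The corollary is essentially the $A = I$ shadow of the product theorem.
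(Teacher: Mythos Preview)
Your proposal is correct and matches the paper's own proof essentially line for line: specialize Theorem~\ref{thm:semi-major} with $A=I$, $B=C$, and then use the generalized sectorial decomposition~(\ref{gsd}) to upgrade $\mathrm{rank}(C^2)$ to $\mathrm{rank}(C)$. The only blemish is a notational slip---you write $MSM$ but mean $\tilde M S_{22}\tilde M$ (or $M=\diag(0_{n-r},\tilde M)$ throughout)---which does not affect the argument.
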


\begin{proof}
The only deviation of this corollary from Theorem 5.2 is that the number of nonzero eigenvalues of $C$ is equal to $\mathrm{rank} (C)$, instead of
$\mathrm{rank} (C^2)$, which follows immediately from decomposition (2).
\end{proof}

\section{Matrix Small Phase Theorem}

The singularity of matrix $I + AB$ plays an important role in the stability analysis of feedback systems \cite{CWKQcdc19,CWKQ2021,MCQ2022}. In \cite{WCKQ2020}, we have shown that for a sectorial matrix $A\in\mathbb{C}^{n\times n}$ with phases in $(-\pi, \pi)$ and $\alpha\in[0,\pi)$, there holds that $\mathrm{rank}(I+AB) =n$ for all $B\in \mathcal{C}[-\alpha, \alpha]$ if and only if $\alpha<\min\{\pi-\overline{\phi}(A), \pi+\underline{\phi}(A)\}$. Now we generalize it in two fronts: 1) semi-sectorial matrices are considered; 2) the phase sectors which $A$ and $B$ respectively belong to are more general.

\begin{theorem}[Matrix small phase theorem] \label{matrix-spt}
Let $A \in \mathbb{C}^{n \times n}$ be a quasi-sectorial matrix with $\gamma (A) \in \mathbb{R}$. Then $\mathrm{det}\,(I + AB)\neq 0$ for all $B\in \mathcal{C}[\alpha, \beta]$ if and only if $[\alpha, \beta] \subset  (-\pi-\underline{\phi}(A), \pi- \overline{\phi}(A))$ modulo $2\pi$.
\end{theorem}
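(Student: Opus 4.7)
The plan is to argue the two implications independently, using Theorem~\ref{thm:semi-major} as the engine for sufficiency and an explicit rank-one $B$ for necessity. For sufficiency, I would shift $\alpha$ and $\beta$ by a common integer multiple of $2\pi$ so that the hypothesis $[\alpha,\beta]\subset(-\pi-\underline{\phi}(A),\pi-\overline{\phi}(A))$ holds literally; this is always possible after choosing the representative of the phase center of $B$ nearest $-\gamma(A)$ on the real line. For any $B\in\mathcal{C}[\alpha,\beta]$, the remark following Theorem~\ref{thm:semi-major} then yields
\[
\underline{\phi}(A)+\alpha \;\le\; \angle \lambda_i(AB) \;\le\; \overline{\phi}(A)+\beta
\]
for every nonzero eigenvalue of $AB$, and by hypothesis both bounds sit strictly inside $(-\pi,\pi)$. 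Hence $-1\notin\sigma(AB)$ and $\det(I+AB)\ne 0$.

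For necessity I would prove the contrapositive by constructing a bad rank-one $B$. The complement on $\mathbb{R}/2\pi\mathbb{Z}$ of the open arc $(-\pi-\underline{\phi}(A),\pi-\overline{\phi}(A))$ is the closed arc $[\pi-\overline{\phi}(A),\pi-\underline{\phi}(A)]$, of length $\overline{\phi}(A)-\underline{\phi}(A)<\pi$. If the modular containment fails, $[\alpha,\beta]$ must meet this complementary arc, so I can choose $\theta\in[\alpha,\beta]$ and $\psi\in[\underline{\phi}(A),\overline{\phi}(A)]$ with $\theta+\psi\equiv\pi\pmod{2\pi}$. From the quasi-sectorial decomposition (\ref{quasis}), every $\psi\in[\underline{\phi}(A),\overline{\phi}(A)]$ is realised as $\angle(y^*C_s y)$ by some unit vector $y$ acting on the sectorial block; pulling back produces a unit $x\in\mathbb{C}^n$ with $x^*Ax=r\,e^{j\psi}$ for some $r>0$. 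Setting $B:=(e^{j\theta}/r)\,xx^*$, one checks that $B$ is rank one with single phase $\theta\in[\alpha,\beta]$ (so $B\in\mathcal{C}[\alpha,\beta]$), while the unique nonzero eigenvalue of $AB$ is $(e^{j\theta}/r)\,x^*Ax=e^{j(\theta+\psi)}=-1$, so $\det(I+AB)=0$.

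The main delicate point is the modulo-$2\pi$ bookkeeping shared by both directions. Concretely, one should fix a principal representative of $\gamma(A)$ once, and then, using $\beta-\alpha<2\pi$ and $\overline{\phi}(A)-\underline{\phi}(A)<\pi$, select an unwrapped representative of $[\alpha,\beta]$ so that the comparisons against $\pm\pi$ in the sufficiency argument and the extraction of $\theta,\psi$ in the necessity argument take place in the same angular coordinates. Once that normalization is fixed, both halves reduce to Theorem~\ref{thm:semi-major} together with the elementary angular geometry of $W(A)$ for quasi-sectorial $A$.
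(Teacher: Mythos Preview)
Your sufficiency argument coincides with the paper's: both invoke Theorem~\ref{thm:semi-major} (and the bound recorded immediately after it) to confine $\angle\lambda_i(AB)$ strictly inside $(-\pi,\pi)$, whence $-1\notin\sigma(AB)$.

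Your necessity argument is correct but takes a genuinely different, more elementary route. The paper does not build a rank-one $B$. Instead, starting from the decomposition $A=T^*\begin{bmatrix}0&0\\0&D\end{bmatrix}T$ with $D$ diagonal unitary, it invokes Horn's converse majorization theorem to manufacture a sectorial matrix $\tilde{M}$ with $\phi(\tilde{M})=\phi(D)$ and with a prescribed eigenvalue $e^{j(\pi-\beta)}$, writes $\tilde{M}=\tilde{T}^*D\tilde{T}$, and sets $B=T^{-1}\begin{bmatrix}0&0\\0&e^{j\beta}\tilde{T}\tilde{T}^*\end{bmatrix}T^{-*}$; this $B$ has rank equal to $\mathrm{rank}(A)$ and all of its phases equal $\beta$. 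Your construction, by contrast, only uses the elementary fact that the angles realised by $W(C_s)$ sweep out exactly $[\underline{\phi}(A),\overline{\phi}(A)]$, which immediately yields a unit vector $x$ with $x^*Ax=re^{j\psi}$ and the rank-one witness $B=(e^{j\theta}/r)xx^*$, whose sole nonzero eigenvalue with $A$ is $e^{j(\theta+\psi)}=-1$. The paper's approach buys a witness of full rank on $\mathcal{R}(A)$ (useful if one later wants to restrict $B$ to, say, matrices nonsingular on that range), but for the theorem as stated your argument is shorter and avoids the Horn realisation machinery entirely.
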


\begin{proof} The sufficiency follows from Theorem~\ref{thm:semi-major} easily. It remains to show the necessity. Since $A$ is quasi-sectorial, there is a nonsingular $T$ such that
$A=T^* \begin{bmatrix} 0 & 0 \\ 0 & D \end{bmatrix} T$,
where $D$ is a diagonal unitary matrix. If $[\alpha, \beta] \not\subset (-\pi-\underline{\phi}(A), \pi-\overline{\phi}(A))$, then either $\overline{\phi}(A)+\beta\geq \pi$ or $\underline{\phi}(A)+\alpha\leq -\pi$. Consider the case when $\overline{\phi}(A)+\beta\geq \pi$. Let $\lambda_1=e^{j(\pi-\beta)}$ and $\lambda=\begin{bmatrix}\lambda_1&\lambda_2&\cdots&\lambda_r\end{bmatrix}'$, where $\angle \lambda_1\geq \angle \lambda_2\geq \dots\geq \angle \lambda_r$, be such that
$\angle\lambda \prec \phi(D)$. This can always be done as $\angle \lambda_1=\pi-\beta\leq \overline{\phi}(D)$. By \cite[Theorem 1]{Horn}, there exists a nonsingular $M\in\mathbb{C}^{r\times r}$ with polar decomposition $M\!=\!PU$ such that $\lambda(M)\!=\!\lambda$ and $\angle \lambda(U)\!=\!\phi(D)$. Then $\tilde{M}\!=\!P^{-\frac{1}{2}}MP^{\frac{1}{2}}=P^{\frac{1}{2}}UP^{\frac{1}{2}}$ is sectorial with $ \lambda(\tilde{M})\!=\!\lambda$ and $\phi(\tilde{M})\!=\!\phi(D)$. Hence $\tilde{M}\!=\!\tilde{T}^*D\tilde{T}$ for some nonsingular $\tilde{T}$. Now, let
\[
B=T^{-1}\begin{bmatrix}0&0\\0&e^{j\beta}\tilde{T}\tilde{T}^{*}\end{bmatrix}T^{-*}.
\]
Clearly, $B\in\mathcal{C}[\alpha,\beta]$. In addition,
\[
I+AB=I+T^*\begin{bmatrix}0&0\\0&\tilde{T}^{-*}e^{j\beta}\tilde{M}\tilde{T}^{*}\end{bmatrix}T^{-*}
\]
loses rank. The case when $\underline{\phi}(A)+\alpha\leq -\pi$ is similar.
\end{proof}

\section{Essential Phases of a Matrix}
In many applications, we may encounter a matrix which is not necessarily semi-sectorial but can be made semi-sectorial by diagonal similarity transformation. Such a matrix is said to be essentially semi-sectorial. For such a matrix $C$, we define its (largest and smallest) essential phases to be
\begin{align*}
\overline\phi_{\mathrm{ess}}(C)\!=\!\inf_{D\in\mathcal{D}} \overline\phi(D^{-1}CD),\ \
\underline\phi_{\mathrm{ess}}(C)\!=\!\sup_{D\in\mathcal{D}} \underline \phi(D^{-1}CD),
\end{align*}
where $\mathcal{D}$ is the set of positive definite diagonal matrices. Here the infimum and supremum are taken over $D\in\mathcal{D}$
such that $D^{-1}CD$ is semi-sectorial and $\overline\phi(D^{-1}CD)$ and $\underline \phi(D^{-1}CD)$ take their principal values. Such an essential phase problem is reminiscent of the essential gain problem that one may find more familiar with. The essential gain of a matrix $C$ is defined as
\[
\overline\sigma_{\mathrm{ess}}(C)=\inf_{D\in\mathcal{D}} \overline\sigma(D^{-1}CD),
\]
which has proven useful in various applications. It has been studied in \cite{Safonov1982} with input from \cite{Bauer}.

It is known that the essential gain problem can be solved numerically but does not have an analytic solution in general. In the case of a nonnegative matrix $C$, the essential gain has an analytic expression $\overline\sigma_{\mathrm{ess}}(C)=\rho(C)$ and the optimal scaling matrix $D$ can be obtained from the Perron eigenvectors of $C$ \cite{StoerWitzgall}, where $\rho(C)$ denotes the spectral radius of $C$. It is a similar situation for the essential phase problem. In general the problem can be solved numerically. For some special classes of matrices, analytic or semi-analytic solutions can be obtained. Of particular interest are the essential phases of Laplacian matrices.

Before proceeding, we introduce some preliminaries on Laplacian matrices of graphs.
A graph, denoted by $\mathbb{G}\!=\!(\mathcal{V}, \mathcal{E})$, consists of a set of nodes $\mathcal{V}$ and a set of edges $\mathcal{E}$. We use $(i,j)$ to represent the edge directed from node $i$ to node $j$.
A path from node $i_1$ to node $i_k$ is a sequence of edges $(i_1, i_2), (i_2, i_3),\dots,(i_{k-1}, i_k)$ with $(i_j, i_{j+1})\in \mathcal{E}$ for $j\in \{1,\dots,k-1\}$. A node is called a root if it has paths to all the other nodes in the graph. A graph $\mathbb{G}$ is said to have a spanning tree if it has a root. Furthermore, $\mathbb{G}$ is said to be strongly connected if every node is a root. A graph is undirected if $(i,j)\in \mathcal{E}$ implies $(j,i)\in\mathcal{E}$.

A weighted graph is a graph with each edge associated with a weight. The edge weights are assumed to be positive. Denote by $a_{ji}$ the weight of edge $(i,j)$, where $a_{ji}$ is understood to be zero when there is no edge from node $i$ to $j$. The indegree and outdegree of node $i$ are given by $d_{\mathrm{in}}(i)=\sum_{j=1}^n a_{ij}$ and $d_{\mathrm{out}}(i)=\sum_{j=1}^n a_{ji}$ respectively. A graph is said to be weight-balanced if $d_{\mathrm{in}}(i)=d_{\mathrm{out}}(i)$ for all $i\in\mathcal{V}$. For a weighted graph, its Laplacian matrix $L=[l_{ij}]$ is defined as
\begin{align*}
l_{ij}=\begin{cases}
-a_{ij}, & i\neq j,\\
\sum_{j=1,j\neq i}^n a_{ij}, &i=j.
\end{cases}
\end{align*}
The Laplacian matrix of a strongly connected graph is irreducible, i.e., not similar via a permutation to a block upper triangular matrix. All of the eigenvalues of a Laplacian matrix $L$ lie in the closed right half plane. Also, $L$ has a zero eigenvalue with a corresponding eigenvector being $\mathbf{1}_n$. Furthermore, $0$ is a simple eigenvalue of $L$ if and only if $\mathbb{G}$ has a spanning tree. See \cite{laplacian-survey} for a survey on Laplacian matrices.

We first consider the essential phases of the Laplacian of a strongly connected graph. In general, the Laplacian matrix $L$ is not semi-sectorial. We have the following result.

\begin{lemma}\label{bal}
Let $\mathbb{G}$ be a strongly connected directed graph and $L$ be its Laplacian matrix. The following statements are equivalent:
\begin{enumerate}
\item $L$ is quasi-sectorial.
\item $L$ is semi-sectorial.
\item $\mathbb{G}$ is weight-balanced.
\end{enumerate}
\end{lemma}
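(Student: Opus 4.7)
The implication (1)$\Rightarrow$(2) is immediate from the definitions, so the plan is to close the cycle by establishing (3)$\Rightarrow$(1) and (2)$\Rightarrow$(3). Throughout, I will exploit the two universal facts $L\mathbf{1}_n=0$ and, by the strong connectivity of $\mathbb{G}$, that $0$ is a simple eigenvalue of $L$, so $\mathcal{N}(L)=\mathrm{span}\{\mathbf{1}_n\}$.

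For (3)$\Rightarrow$(1), I would apply Lemma \ref{quasilemma} with $\alpha=0$ and aim to produce $\epsilon>0$ with $L+L^*\geq \epsilon L^*L$. The key observation is that under the weight-balanced condition the Hermitian part $H:=(L+L^*)/2$ is itself the Laplacian of the undirected graph obtained by symmetrizing the weights to $(a_{ij}+a_{ji})/2$: the diagonal becomes $d_{\mathrm{in}}(i)=\tfrac12(d_{\mathrm{in}}(i)+d_{\mathrm{out}}(i))$ so that the row sums of $H$ vanish, and strong connectivity carries over to the symmetrized graph. Hence $H$ is positive semidefinite with $\mathcal{N}(H)=\mathrm{span}\{\mathbf{1}_n\}$. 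Since $L^*L$ is also positive semidefinite with the same kernel $\mathcal{N}(L^*L)=\mathcal{N}(L)=\mathrm{span}\{\mathbf{1}_n\}$, restricting both matrices to the orthogonal complement $\mathbf{1}_n^{\perp}$ gives two positive definite operators on a finite-dimensional space, and a compactness/eigenvalue argument yields an $\epsilon>0$ with $2H\geq \epsilon L^*L$ on $\mathbf{1}_n^{\perp}$; the inequality extends to all of $\mathbb{C}^n$ since both sides vanish on $\mathbf{1}_n$.

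For (2)$\Rightarrow$(3), I would use that semi-sectoriality of $L$ ($\delta(L)\leq \pi$) means $W(L)$ lies in a closed half-plane, which is equivalent to the existence of $\alpha\in\mathbb{R}$ with $H_\alpha:=e^{-j\alpha}L+e^{j\alpha}L^*\geq 0$. Since $L\mathbf{1}_n=0$, we obtain $\mathbf{1}_n^*H_\alpha\mathbf{1}_n=0$; because $H_\alpha$ is Hermitian positive semidefinite, this forces $\mathbf{1}_n\in\mathcal{N}(H_\alpha)$, i.e.\ $H_\alpha\mathbf{1}_n=0$. Combining with $L\mathbf{1}_n=0$ again yields $e^{j\alpha}L^*\mathbf{1}_n=0$, hence $L^*\mathbf{1}_n=0$. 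Translating this coordinate-wise gives $d_{\mathrm{in}}(i)=d_{\mathrm{out}}(i)$ for every $i$, which is exactly the weight-balanced condition. The main obstacle I anticipate is in (3)$\Rightarrow$(1): one has to verify carefully that the kernels of $L+L^*$ and $L^*L$ actually coincide (which is where both strong connectivity and weight-balance are used) so that the comparison estimate producing $\epsilon$ is valid; the direction (2)$\Rightarrow$(3) is then a short calculation once the semi-sectorial inequality is written down.
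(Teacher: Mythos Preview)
Your proof is correct, but it proceeds along a genuinely different path than the paper's. For (3)$\Rightarrow$(1) the paper writes $L=\rho(B)I-B$ with $B$ irreducible nonnegative and observes that weight balance makes $\mathbf{1}$ a common left and right Perron eigenvector of $B$; it then invokes Lemma~\ref{lemma:Li-sharp} (a result of Li et al.\ on the numerical range of irreducible nonnegative matrices) to conclude that $\rho(B)=w(B)$ is a sharp point of $W(B)$, hence $0$ is a sharp point of $W(L)$. You instead use the paper's own Lemma~\ref{quasilemma}: weight balance makes $(L+L^*)/2$ the Laplacian of the connected symmetrized graph, whose kernel is $\mathrm{span}\{\mathbf{1}_n\}=\mathcal{N}(L^*L)$, so a finite-dimensional comparison on $\mathbf{1}_n^{\perp}$ yields $L+L^*\geq\epsilon\, L^*L$. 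For (2)$\Rightarrow$(3) the paper appeals to the generalized sectorial decomposition~(\ref{gsd}) to deduce that a singular semi-sectorial matrix has coincident left and right null spaces; you obtain the same conclusion directly from the half-plane inequality $H_\alpha\geq 0$ together with the standard fact that a vector annihilating a positive semidefinite quadratic form lies in its kernel. Your route is more self-contained (it needs neither the external numerical-range lemma nor the canonical form~(\ref{gsd})), while the paper's argument ties the result to Perron--Frobenius/nonnegative-matrix structure that is reused later in the treatment of essential phases of M-matrices.
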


To prove Lemma \ref{bal}, we review a lemma on the numerical range of a nonnegative matrix.
The numerical radius of $C\in\mathbb{C}^{n\times n}$ is given by
$w(C)\!=\!\max\{|z|: z\in W(C)\}$.

\begin{lemma}[\cite{CKLi2002}]
\label{lemma:Li-sharp}
Let $C$ be an irreducible nonnegative matrix. Then the following statements are equivalent:
\begin{enumerate}
\item $w(C)$ is a sharp point of $W(C)$;
\item $w(C)=\rho(C)$;
\item $C$ has a common left and right Perron eigenvector.
\end{enumerate}
\end{lemma}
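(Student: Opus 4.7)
The plan is to prove the cyclic chain (3)$\Rightarrow$(2)$\Rightarrow$(3) and (2)$\Leftrightarrow$(1) via the identity $w(C)=\lambda_{\max}(H)$, where $H=(C+C^*)/2$. This identity holds for any entrywise nonnegative $C$: on the one hand, for any unit $x$ one has $|x^{*}Cx|\le|x|^{T}C|x|=|x|^{T}H|x|\le\lambda_{\max}(H)$ since $C$ is real nonnegative and the imaginary/antisymmetric part $(C-C^{T})/2$ vanishes on real vectors; on the other hand, $\lambda_{\max}(H)=\max_{\|x\|=1}\mathrm{Re}(x^{*}Cx)\le w(C)$. Moreover, since the strongly connected digraph of $C$ is also connected after symmetrization, $H$ is irreducible nonnegative symmetric, so by Perron--Frobenius $\lambda_{\max}(H)$ is a simple eigenvalue with a unique (up to scaling) positive eigenvector.

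For (3)$\Rightarrow$(2): if $v>0$ satisfies $Cv=C^{*}v=\rho v$, then $Hv=\rho v$, so by the uniqueness above $\rho$ is the Perron eigenvalue of $H$, giving $w(C)=\lambda_{\max}(H)=\rho=\rho(C)$.

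For (2)$\Rightarrow$(3) (the crux of the proof): suppose $w(C)=\rho(C)=\rho$, so $\lambda_{\max}(H)=\rho$. Let $u>0$ be the Perron right eigenvector of $C$, i.e.\ $Cu=\rho u$. The key observation is that $u$ automatically attains the maximum Rayleigh quotient of $H$:
\begin{equation*}
u^{T}Hu=\tfrac{1}{2}u^{T}Cu+\tfrac{1}{2}u^{T}C^{*}u=\tfrac{1}{2}\rho\|u\|^{2}+\tfrac{1}{2}(Cu)^{*}u=\rho\|u\|^{2},
\end{equation*}
so $u^{T}Hu/\|u\|^{2}=\lambda_{\max}(H)$. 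The Rayleigh characterization for Hermitian $H$ then forces $Hu=\rho u$, and hence $C^{*}u=2\rho u-Cu=\rho u$, making $u$ a common left/right Perron eigenvector.

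For (1)$\Rightarrow$(2) I would invoke the classical fact (see e.g.\ Horn--Johnson, \emph{Topics in Matrix Analysis}) that every sharp point of $W(C)$ is an eigenvalue of $C$: since $w(C)\ge 0$ is assumed sharp, $w(C)\in\sigma(C)$, so $w(C)=|w(C)|\le\rho(C)$, and the reverse $\rho(C)\le w(C)$ is automatic. For (2)$\Rightarrow$(1), use the common eigenvector $v$ from (3) to write $C$ unitarily as $\rho\oplus C'$ on $\mathrm{span}(v)\oplus\{v\}^{\perp}$, so that $W(C)=\mathrm{conv}(\{\rho\}\cup W(C'))$. Because the compression of $H$ to $\{v\}^{\perp}$ has all eigenvalues strictly below $\rho$ (simplicity of the Perron eigenvalue of $H$), $\max\mathrm{Re}\,W(C')=\lambda_{2}(H)<\rho$, so $W(C')$ lies in the open half-plane $\{\mathrm{Re}(z)<\rho\}$, making $\rho$ an isolated extreme point of $W(C)$ whose tangent cone is pointed, i.e.\ a sharp point.

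The main obstacle is clearly the implication (2)$\Rightarrow$(3); everything else is Perron--Frobenius applied to $H$ together with standard numerical-range facts. The difficulty there is conceptual rather than computational: one must notice that the Perron right eigenvector of $C$ alone is enough to witness the maximum Rayleigh quotient of the Hermitian part $H$, which then upgrades it to an eigenvector of $H$ and, via $Hv=(Cv+C^{*}v)/2$, to a common Perron eigenvector.
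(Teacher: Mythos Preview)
The paper does not prove this lemma at all; it is quoted verbatim from \cite{CKLi2002} and used as a black box in the proof of Lemma~\ref{bal}. So there is no ``paper's own proof'' to compare against.

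That said, your argument is correct and self-contained. The pivot identity $w(C)=\lambda_{\max}(H)$ for entrywise nonnegative $C$ is the right organizing principle: your two-line verification (triangle inequality on $|x^{*}Cx|$ to push to $|x|$, then collapse $|x|^{T}C|x|$ to $|x|^{T}H|x|$ since the skew part vanishes on real vectors) is clean, and the reverse inequality via $\lambda_{\max}(H)=\max\mathrm{Re}\,W(C)$ is standard. Irreducibility of $H=(C+C^{T})/2$ follows because the underlying digraph of $C$ is strongly connected, so its symmetrization is connected; this makes the Perron eigenvalue of $H$ simple, which you use twice.

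The implication you flag as the crux, (2)$\Rightarrow$(3), is indeed where the content lies, and your Rayleigh-quotient trick is exactly right: once $\lambda_{\max}(H)=\rho(C)$, the Perron right eigenvector $u$ of $C$ already saturates $u^{T}Hu=\rho\|u\|^{2}$, so variational characterization forces $Hu=\rho u$, whence $C^{T}u=2Hu-Cu=\rho u$. For (1)$\Rightarrow$(2) you correctly invoke Donoghue's theorem that sharp boundary points are eigenvalues. For (2)$\Rightarrow$(1), the block decomposition $C\cong\rho\oplus C'$ is legitimate because the common eigenvector $v$ makes $\{v\}^{\perp}$ invariant under $C$; then $W(C)=\mathrm{conv}(\{\rho\}\cup W(C'))$ with $\max\mathrm{Re}\,W(C')=\lambda_{2}(H)<\rho$ by simplicity, so $W(C')$ sits strictly in the half-plane $\{\mathrm{Re}\,z<\rho\}$ and the tangent cone of the hull at $\rho$ is salient. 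Everything checks.
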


{\em Proof of Lemma~\ref{bal}:} The implication 1$\implies$2 follows directly from the definition of quasi-sectorial and semi-sectorial matrices.

Next we show 2$\implies$3. Since $L$ is a singular semi-sectorial matrix, it has a decomposition of the form (\ref{gsd}). This implies that $L$ has a common left and right eigenvector corresponding to the zero eigenvalue. Therefore, $L\mathbf{1}=L'\mathbf{1}=0$ which implies that $\mathbb{G}$ is weight-balanced.

Finally we show 3$\implies$1. We can express $L$ as $L=\rho(B)I-B$ for a nonnegative matrix $B$. Since $\mathbb{G}$ is weight-balanced, $L$ has $\mathbf{1}$ being a common left and right eigenvector corresponding to eigenvalue $0$. Therefore, $B$ has a common left and right eigenvector corresponding to eigenvalue $\rho(B)$. Then, by Lemma \ref{lemma:Li-sharp}, $\rho(B)=w(B)$ is a sharp point of $W(B)$. It follows that $0$ is a sharp point of $W(L)$, implying that $L$ is quasi-sectorial. 

\vspace{8pt}

Lemma \ref{bal} is the key in finding the essential phases of a Laplacian matrix. For a strongly connected graph, $L$ has a positive left eigenvector $v$ corresponding to the zero eigenvalue, i.e., $v'L=0$. Let $V=\mathrm{diag}\{v\}$ and $D_0=V^{-1/2}$.

\begin{lemma}\label{essphaseLap}
Let $\mathbb{G}$ be a strongly connected directed graph and $L$ be its Laplacian matrix. Then
\begin{align*}
\overline\phi_{\mathrm{ess}}(L)&=\overline\phi(D_0^{-1}LD_0)=\overline\phi(VL),\\
\underline\phi_{\mathrm{ess}}(L)&=\underline\phi(D_0^{-1}LD_0)=\underline\phi(VL).
\end{align*}
\end{lemma}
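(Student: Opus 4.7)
The plan is to establish the lemma in three stages: first, show that $D_0$ is admissible (i.e., $D_0^{-1}LD_0$ is semi-sectorial); second, identify $\phi(D_0^{-1}LD_0)$ with $\phi(VL)$ via a congruence; third, show that no admissible $D$ can beat $D_0$.

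\emph{Stage 1 (admissibility of $D_0$).} I would mirror the $3\Rightarrow 1$ step in the proof of Lemma~\ref{bal}. Write $L = cI - B$ with $c = \rho(B)$ and $B$ nonnegative irreducible. Then $D_0^{-1}LD_0 = cI - \tilde B$, where $\tilde B := V^{1/2}BV^{-1/2}$ remains nonnegative and irreducible with $\rho(\tilde B) = c$. A short computation using $B\mathbf{1} = c\mathbf{1}$ (from $L\mathbf{1} = 0$) and $v'B = cv'$ (from $v'L = 0$) shows that $V^{1/2}\mathbf{1}$ is simultaneously a right and left Perron eigenvector of $\tilde B$. Lemma~\ref{lemma:Li-sharp} then gives $c = w(\tilde B) = \rho(\tilde B)$ as a sharp point of $W(\tilde B)$, so $0$ is a sharp point of $W(D_0^{-1}LD_0)$ and $D_0^{-1}LD_0$ is quasi-sectorial.

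\emph{Stage 2 (congruence identity).} Since $VL = V^{1/2}(D_0^{-1}LD_0)V^{1/2}$, the matrix $VL$ is a congruence of $D_0^{-1}LD_0$ by the nonsingular Hermitian matrix $V^{1/2}$. The three decompositions of a semi-sectorial matrix (namely~(\ref{quasis}), the rotated-Hermitian form, and~(\ref{gsd})) all have the shape $T^*\mathrm{diag}(0_{n-r}, M)T$ with $M$ nonsingular, and this form is preserved under congruence by nonsingular Hermitian matrices: if $D_0^{-1}LD_0 = T^*\mathrm{diag}(0, D, E)T$, then $VL = (TV^{1/2})^*\mathrm{diag}(0, D, E)(TV^{1/2})$ with the same inner block. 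Hence $\phi(VL) = \phi(D_0^{-1}LD_0)$ and, taking extremes, $\overline\phi(VL) = \overline\phi(D_0^{-1}LD_0)$ and $\underline\phi(VL) = \underline\phi(D_0^{-1}LD_0)$.

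\emph{Stage 3 (optimality, the main obstacle).} I would show that every admissible $D$ is a positive scalar multiple of $D_0$, so that $D^{-1}LD = D_0^{-1}LD_0$ for all such $D$ and both the infimum and supremum are trivially attained. For $D \in \mathcal{D}$ with $D^{-1}LD$ semi-sectorial, direct calculations give the right null space of $D^{-1}LD$ as $\mathrm{span}\{D^{-1}\mathbf{1}\}$ and its left null space as $\mathrm{span}\{Dv\}$. The crucial ingredient --- and where the real work lies --- is that each of the three semi-sectorial decompositions has the form $T^*\mathrm{diag}(0_{n-r}, M)T$ with $M$ nonsingular, so the left and right null spaces of any singular semi-sectorial matrix both equal $T^{-1}\mathrm{span}\{e_1,\dots,e_{n-r}\}$ and therefore coincide. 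Applied here, this forces $D^{-1}\mathbf{1} \parallel Dv$, i.e., $D^2v \propto \mathbf{1}$; since $v > 0$ and $D$ is positive diagonal, this yields $D = c'V^{-1/2} = c'D_0$ for some $c' > 0$, finishing the proof.
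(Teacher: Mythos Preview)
Your proof is correct and follows essentially the same line as the paper's: show $D_0^{-1}LD_0$ is quasi-sectorial, identify its phases with those of $VL$ via congruence, and argue that any admissible $D$ forces the left and right null spaces of $D^{-1}LD$ to coincide, pinning $D$ down as a scalar multiple of $D_0$. The only cosmetic difference is that the paper packages Stages~1 and~3 through Lemma~\ref{bal} (by observing that $VL$ is itself the Laplacian of a weight-balanced graph, so Lemma~\ref{bal} applies directly, and then invoking the $2\Rightarrow 3$ direction for uniqueness), whereas you unpack those steps by hand via Lemma~\ref{lemma:Li-sharp} and the null-space structure of the decomposition~(\ref{gsd}).
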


\begin{proof}
Observe that $VL$ is a Laplacian matrix with $\mathbf{1}$ being a common left and right eigenvector corresponding to eigenvalue $0$. This means that $VL$ is the Laplacian matrix of a weight-balanced graph.
By Lemma \ref{bal}, $VL$ is quasi-sectorial. Hence $D_0^{-1}LD_0$ is quasi-sectorial as it is congruent to $VL$. Furthermore, Lemma \ref{bal} implies that if a $D\in\mathcal{D}$ makes $D^{-1}LD$ semi-sectorial, it in fact makes it quasi-sectorial. In addition, such diagonal scaling matrix $D$ is unique up to positive number multiplication. This completes the proof. 
\end{proof}

A piece of information hidden in the above proof is that $D_0^{-1}LD_0$ is quasi-sectorial and thus $\overline{\phi}_{\mathrm{ess}}(L)<\frac{\pi}{2}$.
It then follows from \cite[Lemma 2.3]{WCKQ2020} that
\[
\max_i \{\angle \lambda_i(L)\} \leq  \overline\phi_{\mathrm{ess}}(L)<\pi/2.
\]
Since $L$ is real, there holds $\underline \phi_{\mathrm{ess}}(L)\!=\!-\overline\phi_{\mathrm{ess}}(L)$. For
this reason, hereinafter we use $\phi_{\mathrm{ess}}(L)$ to represent $\overline\phi_{\mathrm{ess}}(L)$ for notational simplicity.
In the case of an undirected graph, $L$ is symmetric and hence $\phi_{\mathrm{ess}}(L) = 0$. This suggests the use of $\phi_{\mathrm{ess}}(L)$ as a measure of ``directedness'' of a graph.

We proceed to consider the case where the graph is not strongly connected but has a spanning tree. In this case, one can decompose the graph into multiple strongly connected components. Suppose the graph has $n_1$ roots and $m$ strongly connected components. Without loss of generality, one can relabel the nodes to form $m$ groups
\begin{align}
\!\{1,\dots,n_1\},\{n_1\!+\!1,\dots,n_2\},\dots,\{n_{m-1}\!+\!1,\dots,n\}\label{strongcd}
\end{align}
so that the nodes in each group correspond to a strongly connected component and the first component contains all the roots. The Laplacian $L$ can be written accordingly in the Frobenius normal form \cite{BrualdiRyser}
\begin{equation}
\begin{split}
L=\begin{bmatrix}
L_{11} & 0& \cdots &0\\
L_{21} & L_{22} & \cdots &0\\
\vdots &\vdots &\ddots &\vdots\\
L_{m1} & L_{m2} &\cdots & L_{mm} \label{eq: L_decomp}
\end{bmatrix},
\end{split}
\end{equation}
where $L_{11}$ is the Laplacian of the subgraph induced by all the roots and $L_{kk}$, $k=2,\dots,m$ are nonsingular irreducible M-matrices\footnote{A matrix $C\in\mathbb{R}^{n\times n}$ is said to be an M-matrix if it can be written as $C=sI-A$, where $A$ is nonnegative and $s\geq\rho(A)$.} and are diagonally dominant.
Moreover, $L$ has a nonnegative left eigenvector $v=\begin{bmatrix}v_1'&0\end{bmatrix}'$ corresponding to eigenvalue $0$, where $v_1$ is a positive left eigenvector of $L_{11}$ corresponding to eigenvalue $0$. Since $v$ is not positive, Lemma \ref{essphaseLap} fails to hold in this case.

Nevertheless, one often needs to find the essential phase of each $L_{kk}$ on the diagonal. Clearly, $\phi_{\mathrm{ess}}(L_{11})$ can be determined as in Lemma \ref{essphaseLap} for $L_{11}$ is the Laplacian associated to the first strongly connected component.
The following lemma shows that $\phi_{\ess}(L_{kk}),k\!=\!2,\dots,m$ exist and are bounded by $\phi_{\mathrm{ess}}(\tilde{L}_k),k\!=\!2,\dots,m$ respectively, where $\phi_{\ess}(L_{kk})$ represents $\overline{\phi}_{\ess}(L_{kk})$ and $\tilde{L}_k$ is the Laplacian matrix of the $k$th strongly connected component of the graph, which can be obtained by reducing the diagonal elements of $L_{kk}$ to zero all row sums.

\begin{lemma}\label{epLii}
Let $\mathbb{G}$ be a directed graph with a spanning tree and $L$ be its Laplacian matrix in the form of (\ref{eq: L_decomp}). Then $\phi_{\mathrm{ess}}(L_{kk})\leq \phi_{\mathrm{ess}}(\tilde{L}_k),k=2,\dots,m$.
\end{lemma}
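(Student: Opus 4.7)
The plan is to exhibit a single diagonal similarity that simultaneously semi-sectorializes $L_{kk}$ with top phase no larger than $\phi_{\mathrm{ess}}(\tilde{L}_k)$. The starting point is a structural observation about the difference between the two matrices. The off-diagonal entries of $L_{kk}$ are precisely the negatives of the weights of edges within component $k$, so they coincide with those of $\tilde{L}_k$. The diagonal entry $[L_{kk}]_{ii}$ records the total in-degree of node $i$ in the whole graph $\mathbb{G}$, whereas $[\tilde{L}_k]_{ii}$ records only the in-degree contributed from within component $k$. Therefore
\[
L_{kk} = \tilde{L}_k + E_k,
\]
where $E_k$ is a nonnegative diagonal matrix whose $i$-th entry is the total weight of edges coming into node $i$ from the earlier components $1,\dots,k-1$. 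Since $L_{kk}$ is nonsingular by the assumption on the Frobenius form, $E_k$ is in fact nonzero.

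With this identity in hand, I would invoke Lemma~\ref{essphaseLap} on the Laplacian $\tilde{L}_k$ of the $k$-th strongly connected component: there is a positive diagonal matrix $D_k^{\star}$, built from the left null vector of $\tilde{L}_k$, such that $D_k^{\star -1}\tilde{L}_k D_k^{\star}$ is quasi-sectorial and achieves $\overline{\phi}(D_k^{\star -1}\tilde{L}_k D_k^{\star}) = \phi_{\mathrm{ess}}(\tilde{L}_k) < \pi/2$. Because $E_k$ is diagonal, the same similarity leaves it invariant, so
\[
D_k^{\star -1}L_{kk}D_k^{\star} = D_k^{\star -1}\tilde{L}_k D_k^{\star} + E_k.
\]
Setting $\phi = \phi_{\mathrm{ess}}(\tilde{L}_k)$, the first summand lies in $\mathcal{C}[-\phi,\phi]$ by construction, while $E_k$ is positive semi-definite and therefore also lies in $\mathcal{C}[-\phi,\phi]$ (its nonzero phases are all $0$). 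Since $2\phi < \pi$, Lemma~\ref{convexcone} applies and gives $D_k^{\star -1}L_{kk}D_k^{\star} \in \mathcal{C}[-\phi,\phi]$. In particular $L_{kk}$ is essentially semi-sectorial, and
\[
\phi_{\mathrm{ess}}(L_{kk}) \;\leq\; \overline{\phi}\bigl(D_k^{\star -1}L_{kk}D_k^{\star}\bigr) \;\leq\; \phi_{\mathrm{ess}}(\tilde{L}_k),
\]
which is the claimed bound.

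The only delicate point is the structural identification $L_{kk}-\tilde{L}_k = E_k \succeq 0$ with $E_k$ diagonal; once this is observed, the rest is a direct application of the convex-cone closure of $\mathcal{C}[\alpha,\beta]$ combined with the attained optimal scaling for strongly connected Laplacians. I would not expect any substantial obstacle beyond the careful accounting of which edges of $\mathbb{G}$ contribute to $L_{kk}$ but not to $\tilde{L}_k$.
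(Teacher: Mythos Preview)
Your argument is correct and follows essentially the same route as the paper: decompose $L_{kk}=\tilde{L}_k+E_k$ with $E_k$ nonnegative diagonal, apply the optimal scaling $D_k^{\star}$ from Lemma~\ref{essphaseLap} to $\tilde{L}_k$, note that $E_k$ commutes with $D_k^{\star}$, and then use the convex-cone property (Lemma~\ref{convexcone}) to bound the phase of the sum. The paper's proof is identical in substance, merely invoking the cone property implicitly rather than citing Lemma~\ref{convexcone} by name.
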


\begin{proof}
Note that $L_{kk}=\tilde{L}_k+Z_k,k=2,\dots,m$, where $Z_k$ is a diagonal matrix with nonnegative diagonal elements and $Z_k\neq 0$. Let $v_k$ be a positive left eigenvector of $\tilde{L}_k$ corresponding to eigenvalue $0$ and $D_k=\mathrm{diag}\{v_k\}^{-\frac{1}{2}}$. Then $D_k^{-1}L_{kk}D_k\!=\!D_k^{-1}\tilde{L}_k D_k+Z_k$.
By Lemma \ref{essphaseLap}, we know that $D_k^{-1}\tilde{L}_k D_k$ is quasi-sectorial and
\begin{align*}
    \phi_{\mathrm{ess}}(\tilde{L}_k)=\overline{\phi}(D_k^{-1}\tilde{L}_k D_k)<\pi/2.
\end{align*}
Since $Z_k\geq 0$, it follows that
\begin{align*}
\overline{\phi}(D_k^{-1}L_{kk}D_k)=\overline{\phi}(D_k^{-1}\tilde{L}_k D_k+Z_k)\leq \phi_{\mathrm{ess}}(\tilde{L}_k).
\end{align*}
By definition, we have
$\phi_{\mathrm{ess}}(L_{kk})\!\leq\! \overline{\phi}(D_k^{-1}L_{kk}D_k)$ and thus $\phi_{\mathrm{ess}}(L_{kk})\!\leq\! \phi_{\mathrm{ess}}(\tilde{L}_k)$.
This completes the proof. 
\end{proof}

Lemma \ref{epLii} provides an upper bound of the essential phase for a nonsingular diagonally-dominant irreducible M-matrix. However, a general M-matrix maynot be diagonally-dominant. For this case, Lemma \ref{epLii} may not hold.
\begin{example}
Consider an M-matrix
\begin{align*}
M= 1.0691 I -\begin{bmatrix}
0.5338 & 0.3381  & 0.0103\\
0.1092  &  0.2940  &  0.0484\\
0.8258  &  0.7463  &  0.6679
\end{bmatrix}.
\end{align*}
The essential phase of the associated Laplacian is $0.1403$ while $\phi_{\ess}(M)= 0.1662$.
\end{example}

The computation of the essential phase of an M-matrix will be studied in the next section.

\section{Computation of Essential Phases of M-Matrices}
In this section, we will study the essential phase of a general M-matrix. We first show that the essential phase of a general nonsingular irreducible M-matrix exists and provide an upper bound. The matrix $M$ can be written into the form $M=sI-A$, where $A$ is a irreducible nonnegative matrix and $s\geq\rho(A)$. According to Perron-Frobenius Theorem, the matrix $M$ has positive left and right eigenvectors $x$ and $y$ respectively corresponding to the eigenvalue $s-\rho(A)$. Let $D_0\!=\!\mathrm{diag}(\sqrt{x_1/y_1},\dots,\sqrt{x_n/y_n})$. We have the following lemma, whose proof follows directly from Lemma \ref{lemma:Li-sharp} and thus is omitted for brevity.

\begin{lemma}\label{upper}
Let $M$ be a nonsingular irreducible M-matrix. Then $\phi_{\mathrm{ess}}(M)\!\leq\!\overline\phi(D_0^{-1}MD_0)$.
\end{lemma}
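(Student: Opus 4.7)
The plan is to show that $D_0^{-1}MD_0$ is semi-sectorial (in fact sectorial), whereupon the inequality is immediate from the very definition of $\overline\phi_{\mathrm{ess}}(M)$ as an infimum over admissible positive diagonal scalings. First I would write $M = sI - A$ with $A$ irreducible nonnegative and $s > \rho(A)$ strictly, since $M$ is nonsingular, and set $B := D_0^{-1}AD_0$, so that $D_0^{-1}MD_0 = sI - B$ with $B$ again irreducible nonnegative and cospectral with $A$.

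The heart of the argument is to verify that $B$ possesses a common left and right Perron eigenvector corresponding to $\rho(A)$. Starting from $x'A = \rho(A)x'$ and $Ay = \rho(A)y$, I would compute that, modulo the convention on which side $D_0$ sits, the left Perron of $B$ is a rescaling of $x$ by $D_0$ and the right Perron is a rescaling of $y$ by $D_0^{-1}$, and that the prescribed diagonal entries $\sqrt{x_i/y_i}$ are tuned precisely so that these two rescaled vectors become proportional, both pointing in the direction $(\sqrt{x_iy_i})_{i=1}^n$. With this common Perron vector in hand, Lemma~\ref{lemma:Li-sharp} applies to $B$ and gives $w(B) = \rho(B) = \rho(A)$, so
\[
W(D_0^{-1}MD_0) \;=\; s - W(B) \;\subset\; \{z \in \mathbb{C} : |z - s| \le \rho(A)\},
\]
which is a closed disk lying strictly in the open right half-plane because $s > \rho(A)$. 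In particular $0 \notin W(D_0^{-1}MD_0)$, so $D_0^{-1}MD_0$ is sectorial, hence semi-sectorial, and thus an admissible competitor in the infimum defining $\overline\phi_{\mathrm{ess}}(M)$; the bound $\phi_{\mathrm{ess}}(M) \le \overline\phi(D_0^{-1}MD_0)$ follows at once.

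The main obstacle, and the reason the authors elide the argument as ``following directly from Lemma~\ref{lemma:Li-sharp}'', is the careful orientation of the diagonal similarity so that the rescaled left and right Perron vectors of $A$ really do coincide up to a scalar under the prescribed $D_0$; this is a direct but delicate diagonal-entry calculation. Once that bookkeeping is settled, the remainder is purely geometric and mirrors the $3 \Rightarrow 1$ step in the proof of Lemma~\ref{bal}, with the shift $sI$ moving the sharp point of $W(B)$ away from the origin so that sectoriality (rather than merely quasi-sectoriality) is obtained.
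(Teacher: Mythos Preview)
Your proposal is correct and follows exactly the route the paper intends: symmetrize the Perron eigenvectors of $A$ by the diagonal scaling $D_0$, invoke Lemma~\ref{lemma:Li-sharp} to obtain $w(D_0^{-1}AD_0)=\rho(A)$, and then use $s>\rho(A)$ to place $W(D_0^{-1}MD_0)$ inside a disk avoiding the origin, exactly mirroring the $3\Rightarrow 1$ step in the proof of Lemma~\ref{bal}. Your caveat about the orientation of $D_0$ is well taken: with the paper's stated convention ($x$ left, $y$ right) the entries should be $\sqrt{y_i/x_i}$ rather than $\sqrt{x_i/y_i}$ to make $D_0x$ and $D_0^{-1}y$ both point along $(\sqrt{x_iy_i})_i$, so the bookkeeping you flag is indeed the only delicate point.
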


It is known that $\max_i\angle\lambda_i(M)$ serves as a lower bound of $\phi_{\mathrm{ess}}(M)$. With the upper bound and the lower bound, in the sequel, we aim to propose an algorithm to numerically compute $\phi_{\ess}(M)$.

According to the fact that phases are preserved under congruence transformation and $D$ is positive diagonal, we have
\[
\overline\phi(D^{-1}MD)=\overline\phi(D^{-T}D^{-1}M)=\overline\phi(D^{-2}M).
\]
Note that $D^{-2}$ is positive diagonal, hence
\begin{align*}
\phi_{\mathrm{ess}}(M)\!=\!\inf_{D\in\mathcal{D}} \overline\phi(D^{-1}MD)=\!\inf_{D\in\mathcal{D}} \overline\phi(DM),
\end{align*}
where $\mathcal{D}$ is a set of positive definite diagonal matrices. The computation of $\phi_{\mathrm{ess}}(M)$ can be written as an optimization problem
\begin{align}\label{opt:original}
\inf_{D\in\mathcal{D}}\quad&\overline\phi(DM).
\end{align}
The epigraph form of the problem (\ref{opt:original}) is
\begin{align}\label{opt:epi}
\inf\{\alpha: \overline\phi(DM)\leq\alpha,D\in\mathcal{D}\}.
\end{align}
Since $\overline\phi(DM)\leq\alpha$ is equivalent to $e^{j(\frac{\pi}{2}-\alpha)}DM$ is accretive, i.e., $e^{j(\frac{\pi}{2}-\alpha)}DM+e^{-j(\frac{\pi}{2}-\alpha)}M^TD\geq 0$, the problem (\ref{opt:epi}) can be further written as
\begin{align}\label{opt:cos}
\inf\{\alpha:(\sin\alpha+j\cos\alpha)DM+(\sin-j\cos\alpha)M^TD\geq 0,D\in\mathcal{D}\}.
\end{align}
It is shown in the last section that the optimal value of problem (\ref{opt:cos}) $\alpha^*$ lies in $[0,\frac{\pi}{2}]$. Since whether $\alpha^*=0$ can be easily verified by checking whether there exists $D\in\mathcal{D}$ such that $DM>0$, hereinafter we assume that $\alpha^*>0$. It follows that $\sin\alpha>0$. Therefore, the problem is translated to
\begin{align}
\inf\{\alpha:(1+j\cot\alpha)DM+(1-j\cot\alpha)M^TD\geq 0,D\in\mathcal{D}\}.
\end{align}
This is an optimization problem over bilinear matrix inequality constraint, which might be NP-hard \cite{vanantwerp2000tutorial}. However, as the upper bound and lower bound of the objective function can be obtained, the bisection algorithm can be used to solve the problem. Here we choose the initial lower bound to be $0$ owing to the fact that $\max_i\angle\lambda_i(M)$ is not easy to be obtained. The detailed algorithm is given in Algorithm 1.
\begin{algorithm}
	\caption{An algorithm for computing the essential phase}\label{alg:cap}
	\begin{algorithmic}[1]
		\Require Matrix $M$, a lower bound of $\underline\alpha=0$, an upper bound $\bar\alpha=\overline\phi(D_0^{-1}MD_0)$, absolute error $e$, i.e., the desired degree of accuracy.
		\Ensure The optimal value $\alpha^*$.
	    \If {there exists $D$ such that $DM\geq 0$}
		\State $\alpha^*\gets 0$
		\Else
		\State $\beta=(\bar\alpha+\underline\alpha)/2$
		\While {$\bar\alpha-\underline\alpha\geq e$}
		\If {there exists $D$ such that
			\[(1+j\cot\beta)DM+(1-j\cot\beta)M^TD\geq 0\]}
		\State $\bar\alpha\gets\beta$
		\Else
		\State $\underline\alpha\gets\beta$
		\EndIf
		\State $\beta=(\bar\alpha+\underline\alpha)/2$
		\EndWhile
		\State $\alpha^*\gets \overline\alpha$
		\EndIf
	\end{algorithmic}
\end{algorithm}

We also want to point out that the algorithm for the computation of essential phases can be easily generalized to arbitrary square matrices by giving an initial guess of upper and lower bounds.
\begin{example}
Consider an M-matrix
\begin{align*}
M=3I-
\begin{bmatrix}
 0.8147  &  0.6324  &   0.9575  &   0.9572\\
    0.9058 &    0.0975  &   0.9649   &  0.4854\\
    0.1270 &    0.2785 &    0.1576 &    0.8003\\
    0.9134 &    0.5469  &   0.9706 &    0.1419
\end{bmatrix},
\end{align*}
which has an eigenvalue at $0.5978$ with an associated right eigenvector
\[
\begin{bmatrix}0.6621& 0.4819 &0.2766& 0.5029\end{bmatrix}'
\]and left eigenvector
\[
\begin{bmatrix}0.5308 &0.3371& 0.5902 &0.5062\end{bmatrix}'.
\]
Then the upper bound is given by $\overline\phi(D_0^{-1}MD_0)=0.1053.$ Choose absolute error $e=10^{-5}$. Applying the bisection algorithm yields $\phi_{\mathrm{ess}}(M)=0.0973$.
\end{example}

\section*{Acknowledgement}
This work was partially supported by the Research Grants Council of Hong Kong Special Administrative Region, China, under the General Research Fund 16201120 and the National Natural Science Foundation of China under grants 62073003, 72131001.

\bibliographystyle{elsarticle-num}
\bibliography{ctphase,dynamic_v2}

\end{document}